\newtheorem{theorem}{Theorem}[section]
\newtheorem{lemma}[theorem]{Lemma}
\theoremstyle{definition}
\theoremstyle{remark}
\numberwithin{equation}{section}
\numberwithin{equation}{section}
\begin{document}

\setcounter{page}{1}

\begin{center}
{\Large \textbf{\ Approximation properties on $q$-Sz\'{a}%
sz-Mirakjan-Kantrovich Stancu type operators via Dunkl generalization }}

\bigskip

\textbf{M. Mursaleen} and \textbf{Md. Nasiruzzaman}

Department of\ Mathematics, Aligarh Muslim University, Aligarh--202002, India%
\\[0pt]

mursaleenm@gmail.com; nasir3489@gmail.com \\[0pt]

\bigskip

\bigskip

\textbf{Abstract}
\end{center}

\parindent=8mm {\footnotesize {\ This paper is devoted to study the approximation
properties and rate of approximation of the Sz\'{a}sz-Mirakjan-Kantrovich-Stancu type
polynomials generated by the Dunkl generalization of the exponential function with
respect to $q$-calculus. We present approximation properties with the help of
well-known Korovkin's theorem and determine the rate of convergence in terms of
classical modulus of continuity, the class of Lipschitz functions, Peetre's K-functional, and the
second-order modulus of continuity. Moreover, we obtain the approximation
results for Bivariate case for these operators.}}

\bigskip

{\footnotesize \emph{Keywords and phrases}: $q$-integers; Dunkl analogue; Sz%
\'{a}sz operator; {$q$- Sz\'{a}sz-Mirakjan-Kantrovich;} modulus of
continuity; {Peetre's K-functional.}}

{\footnotesize \emph{AMS Subject Classification (2010)}: 41A25, 41A36, 33C45.%
}

\section{Introduction and preliminaries}
S.N Bernstein \cite{sbbl1} introduced the very well known Bernstein Classical operators in 1912 as

\begin{equation}
B_{n}(f;x)=\sum_{k=0}^{n}\binom{n}{k}x^{k}(1-x)^{n-k}f\left( \frac{k}{n}%
\right) ,~~~~~~~x\in \lbrack 0,1].  \label{s1}
\end{equation}%
for $n\in \mathbb{N}$ and $f\in C[0,1]$.\newline

In 1950, for $x \geq 0$, Sz\'{a}sz \cite{sbbl4} introduced the operators
\begin{equation}  \label{s2}
S_n(f;x)=e^{-nx}\sum_{k=0}^\infty \frac{(nx)^k}{k!} f\left(\frac{k}{n}%
\right),~~~~~~~f \in C[0,\infty).
\end{equation}

The first $q$-analogue of the well-known Bernstein polynomials was introduced
by Lupa\c{s} \cite{sbbl2}, by applying the idea of $q$-integers and investigated its approximating
and shape-preserving properties. In 1997 Phillips \cite{sbbl3} considered another $q$-analogue of
the classical Bernstein polynomials. Later on, many authors introduced $q$%
-generalizations of various operators and investigated several approximation
properties \cite{smah1,smah2,smur1,smur3,smn2} etc. Recently, making use of $%
(p,q)$-calculus, the first $(p,q)$-analogue of Bernstein operators was given
in \cite{sn1}. There is a generalization in $(p,q)$-calculus.

The $q$-integer $[n]_{q}$, the $q$-factorial $[n]_{q}!$ and the $q$-binomial
coefficient are defined by (see \cite{sbbl5})
\begin{align*}
\lbrack n]_{q}& :=\left\{
\begin{array}{ll}
\frac{1-q^{n}}{1-q}, & \hbox{if~}q\in \mathbb{R}^{+}\setminus \{1\} \\
n, & \hbox{if~}q=1,%
\end{array}%
\right. \mbox{for $n\in \mathbb{N} $~and~$[0]_q=0$}, \\
\lbrack n]_{q}!& :=\left\{
\begin{array}{ll}
\lbrack n]_{q}[n-1]_{q}\cdots \lbrack 1]_{q}, & \hbox{$n\geq 1$,} \\
1, & \hbox{$n=0$,}%
\end{array}%
\right. \\
\left[
\begin{array}{c}
n \\
k%
\end{array}%
\right] _{q}& :=\frac{[n]_{q}!}{[k]_{q}![n-k]_{q}!},
\end{align*}%
respectively.

\noindent The $q$-analogue of $(1+x)^{n}$ is defined by
\begin{equation*}
(1+x)_{q}^{n}:=\left\{
\begin{array}{ll}
(1+x)(1+qx)\cdots (1+q^{n-1}x) & \quad n=1,2,3,\cdots \\
1 & \quad n=0.%
\end{array}%
\right.
\end{equation*}

\noindent The Gauss binomial formula is given by
\begin{equation*}
(x+a)_{q}^{n}=\sum\limits_{k=0}^{n}\left[
\begin{array}{c}
n \\
k%
\end{array}%
\right] _{q}q^{k(k-1)/2}a^{k}x^{n-k}.
\end{equation*}

In 1930 the first Bernstein-Kantorovich operators \cite{kantiv} in 1930 and in 1950 Sz\'{a}asz
operators \cite{sbbl4} were defined. In the recent years Sucu \cite{sbbl9} define a
Dunkl analogue of Sz\'{a}sz operators via a generalization of the exponential function given
by \cite{sbbl10} as follows:

\begin{equation}
S_{n}^*(f;x):= \frac{1}{e_\mu(nx)}\sum_{k=0}^\infty \frac{(nx)^k}{%
\gamma_\mu(k)} f \left(\frac{k+2\mu\theta_k}{n}\right),
\end{equation}
where $x \geq 0,~~~f \in C[0,\infty),\mu \geq 0,~~~n \in \mathbb{N}$\newline
and
\begin{equation*}
e_\mu(x)= \sum_{n=0}^\infty \frac{x^n}{\gamma_\mu(n)}.
\end{equation*}
Here
\begin{equation*}
\gamma_\mu(2k)= \frac{2^{2k}k!\Gamma\left(k+\mu+\frac{1}{2}\right)}{%
\Gamma\left(\mu+\frac{1}{2}\right)},
\end{equation*}
and
\begin{equation*}
\gamma_\mu(2k+1)= \frac{2^{2k+1}k!\Gamma\left(k+\mu+\frac{3}{2}\right)}{%
\Gamma\left(\mu+\frac{1}{2}\right)}.
\end{equation*}

Cheikh et al. \cite{sbbl11} stated the $q$-Dunkl classical $q$-Hermite type
polynomials and gave definitions of $q$-Dunkl analogues of exponential
functions and recursion relations for $\mu >-\frac{1}{2}$ and $0<q<1$.
\begin{equation}
e_{\mu ,q}(x)=\sum_{n=0}^{\infty }\frac{x^{n}}{\gamma _{\mu ,q}(n)},~~~x\in
\lbrack 0,\infty )  \label{sr1}
\end{equation}%
\begin{equation}
E_{\mu ,q}(x)=\sum_{n=0}^{\infty }\frac{q^{\frac{n(n-1)}{2}}x^{n}}{\gamma
_{\mu ,q}(n)},~~~x\in \lbrack 0,\infty )  \label{sr2}
\end{equation}%
\begin{equation}
\gamma _{\mu ,q}(n+1)=\left( \frac{1-q^{2\mu \theta _{n+1}+n+1}}{1-q}\right)
\gamma _{\mu ,q}(n),~~~~~n\in \mathbb{N},  \label{sr3}
\end{equation}

\begin{equation*}
\theta _{n}=%
\begin{cases}
0 & \quad \text{if }n\in 2\mathbb{N}, \\
1 & \quad \text{if }n\in 2\mathbb{N}+1.%
\end{cases}%
\end{equation*}%
Recently \.{I}\c{c}\={o}z and \c{C}ekim gave a Dunkl generalization of
Kantrovich integral generalization of Sz\'{a}sz operators in \cite{ckz} and
a Dunkl generalization of Sz\'{a}sz operators via $q$-calculus in \cite%
{sbbl13}.

This paper is dedicated to construct Kantrovich Stancu type Sz\'{a}sz-Mirakjan operators generated by
the Dunkl generalization of the exponential function. We study approximation properties
of the operators via a universal Korovkin’s type theorem and a weighted Korovkin’s
type theorem and the rate of convergence of the operators for functions belonging to the
Lipschitz class are presented.




\section{auxiliary results}

We define a Dunkl generalization of Sz\'{a}sz-Mirakjan-Kantrovich Stancu type
operators via $q$-calculs: \newline

For any $x\in [0,\infty), ~~~n \in \mathbb{N}, ~~0<q< 1,$ and $\mu>\frac{1}{2%
}$, we define

\begin{equation}
\tilde{K_{n,q}}(f;x)=\frac{[n]_{q}}{E_{\mu ,q}([n]_{q}x)}\sum_{k=0}^{\infty }%
\frac{([n]_{q}x)^{k}}{\gamma _{\mu ,q}(k)}q^{\frac{k(k-1)}{2}}\int_{\frac{%
[k+2\mu \theta _{k}]_{q}}{q^{k-2}[n]_{q}}}^{\frac{[k+1+2\mu \theta
_{k}]_{q}-1}{q^{k-1}[n]_{q}}+\frac{1}{[n]_{q}}}f\left(\frac{nt+\alpha}{n+\beta}\right)d_{q}t,  \label{nss1}
\end{equation}%
where $f$ is continuous and nondecreasing function on $[0,\infty )$. When we take $\alpha=\beta=0$ in the operator $\tilde{K_{n,q}}(f;x)$ \eqref{nss1}, then we obtain the Kantorovich-type generalization of
Sz\'{a}sz operators \cite{sno6}.

\begin{lemma}
\label{nlm1} Let $K_{n,q}^{\ast }(.~;~.)$ be the operators given by %
\eqref{nss1}. Then we have:

\begin{enumerate}
\item \label{nlm11} $\Tilde{K_{n,q}}(1;x)=1$

\item \label{nlm12} $\Tilde{K_{n,q}}(t;x)=\frac{n}{(n+\beta)}\left(\frac{\alpha}{n}+\frac{1}{ [2]_q[n]_q}\right)+\frac{2nq}{[2]_q(n+\beta)}x$

\item \label{nlm13}$\frac{n^2}{(n+\beta)^2}\left(\frac{\alpha^2}{n^2}+\frac{2 \alpha}{[2]_q n [n]_q}+
\frac{ 1}{ [3]_q [n]_q^2 (n+\beta)}\right)
+ \frac{ q n^2 }{(n+\beta)^2}\left(\frac{3 q^{2 \mu +1}[1+2\mu]_q}{[3]_q[n]_q} +\frac{3}{[3]_q[n]_q}+ \frac{4 \alpha  }{[2]_q n } \right)x+
\frac{3q n^2}{[3]_q(n+\beta)^2}x^2  \leq \Tilde{K_{n,q}}%
(t^2;x) \leq \newline
\frac{n^2}{(n+\beta)^2}\left(\frac{\alpha^2}{n^2}+\frac{2 \alpha}{[2]_q n [n]_q}+
\frac{ 1}{ [3]_q [n]_q^2 (n+\beta)}\right)
+ \frac{n^2}{(n+\beta)^2}\left(\frac{3 [1+2\mu]_q}{[3]_q[n]_q} +\frac{3q}{[3]_q[n]_q}+ \frac{4 q \alpha  }{[2]_q n } \right)x+
\frac{3n^2}{[3]_q(n+\beta)^2}x^2 $.
\end{enumerate}
\end{lemma}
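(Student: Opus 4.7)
The approach is to evaluate each Jackson $q$-integral in the definition \eqref{nss1} via $\int_{a}^{b} t^{s}\, d_{q}t = (b^{s+1}-a^{s+1})/[s+1]_{q}$ for $s=0,1,2$, expand $f\bigl((nt+\alpha)/(n+\beta)\bigr)$ for $f=1,t,t^{2}$, and simplify the resulting infinite series by combining the Dunkl recursion $\gamma_{\mu,q}(k+1) = [2\mu\theta_{k+1}+k+1]_{q}\,\gamma_{\mu,q}(k)$ from \eqref{sr3} with the defining series \eqref{sr2} of $E_{\mu,q}$.

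\textbf{Key simplification of the limits.} Denote by $a_{k}$ and $b_{k}$ the lower and upper $q$-integration limits in \eqref{nss1}. Using $[m+1]_{q}=1+q[m]_{q}$ one finds $[k+1+2\mu\theta_{k}]_{q}-1 = q\,[k+2\mu\theta_{k}]_{q}$, so that $b_{k} = \frac{[k+2\mu\theta_{k}]_{q}}{q^{k-2}[n]_{q}}+\frac{1}{[n]_{q}} = a_{k}+\frac{1}{[n]_{q}}$. In particular $b_{k}-a_{k}=1/[n]_{q}$, from which assertion (1) follows at once: after cancellation, the sum collapses back to $E_{\mu,q}([n]_{q}x)/E_{\mu,q}([n]_{q}x) = 1$.

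\textbf{Part (2).} Since $b_{k}^{2}-a_{k}^{2}=(2a_{k}+1/[n]_{q})/[n]_{q}$, the Jackson $q$-integral decomposes into a linear combination of $a_{k}$ and $1$. After pulling out the prefactors, the only non-trivial summation reduces to $\sum_{k\ge 0}\frac{([n]_{q}x)^{k}\,q^{k(k-1)/2}\,[k+2\mu\theta_{k}]_{q}}{\gamma_{\mu,q}(k)}$. Reindexing $k\to k+1$ and invoking \eqref{sr3} converts this into $q\,[n]_{q}x\,E_{\mu,q}([n]_{q}x)$, the extra factor of $q$ coming from the shift in the exponent of $q^{k(k-1)/2}$. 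Substituting back yields the announced closed form for $\tilde{K}_{n,q}(t;x)$.

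\textbf{Part (3).} This is the main obstacle. The integrand $(nt+\alpha)^{2}/(n+\beta)^{2}$ produces $b_{k}^{3}-a_{k}^{3}=(b_{k}-a_{k})(b_{k}^{2}+b_{k}a_{k}+a_{k}^{2})$, which after $b_{k}=a_{k}+1/[n]_{q}$ reduces to a quadratic in $a_{k}=[k+2\mu\theta_{k}]_{q}/(q^{k-2}[n]_{q})$. The quadratic contribution generates the series $\sum_{k\ge 0}\frac{([n]_{q}x)^{k}\,q^{k(k-1)/2}\,[k+2\mu\theta_{k}]_{q}^{2}}{q^{2(k-2)}\,\gamma_{\mu,q}(k)}$, which requires two successive applications of \eqref{sr3} and unavoidably introduces the parity-sensitive factor $[1+2\mu\theta_{k+2}]_{q}$ together with a power $q^{2\mu\theta_{k+2}+1}$. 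Since $\theta_{n}\in\{0,1\}$, one has $1\le [1+2\mu\theta_{n}]_{q}\le [1+2\mu]_{q}$ and $q\le q^{2\mu\theta_{n}+1}\le 1$; inserting the tight extremes of these inequalities in the positions where a uniform closed form is unavailable produces the symmetric upper and lower bounds stated in the lemma, with the $q$ and $q^{2\mu+1}$ factors in the lower bound tracking exactly these exchanges. The linear term in $a_{k}$ (arising both from the cubic expansion and from the cross term $2\alpha\cdot(b_{k}^{2}-a_{k}^{2})/((n+\beta)^{2}[2]_{q})$) and the constant term are handled exactly as in part~(2), so no bounding is required there.
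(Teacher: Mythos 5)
The paper states Lemma \ref{nlm1} without any proof, so there is nothing to compare your route against; for the record, your strategy (evaluate each Jackson integral via $\int_a^b t^s\,d_qt=(b^{s+1}-a^{s+1})/[s+1]_q$, exploit $b_k=a_k+1/[n]_q$, and collapse the series with the recursion \eqref{sr3}) is the standard one for this family of operators and is surely what the authors intend. Your parts (1) and (2) check out: the identity $[k+1+2\mu\theta_k]_q-1=q[k+2\mu\theta_k]_q$ is correct, and the shifted series does produce the coefficient $\frac{2nq}{[2]_q(n+\beta)}$. One caution in part (2): the series you display, $\sum_k\frac{([n]_qx)^kq^{k(k-1)/2}[k+2\mu\theta_k]_q}{\gamma_{\mu,q}(k)}$, has dropped the $k$-dependent factor $1/(q^{k-2}[n]_q)$ coming from $a_k$; without it the reindexed sum equals $[n]_qx\,E_{\mu,q}(q[n]_qx)$, not $q[n]_qx\,E_{\mu,q}([n]_qx)$. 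The factor cannot be ``pulled out'' and must stay inside the sum, where its $q^{-(k-2)}$ exactly cancels the extra $q^j$ produced by the shift; only then does your claimed closed form emerge.

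Part (3) is where the lemma's actual content lives, and there your argument has a genuine gap. First, the inequality you invoke is wrong: for $0<q<1$ and $\mu>\tfrac12$ one has $q^{2\mu+1}\le q^{2\mu\theta_n+1}\le q$, not $q\le q^{2\mu\theta_n+1}\le 1$. This is not cosmetic, because the lower bound in the lemma carries precisely the factor $q^{2\mu+1}$ (and an overall $q$), so the two extremes you substitute determine exactly the constants that the lemma asserts; with your stated inequality you would not reproduce them. Second, you never actually perform the second application of \eqref{sr3} nor show how the parity-dependent identity $[k+2\mu\theta_k]_q=[k-1+2\mu\theta_{k-1}]_q+q^{k-1+2\mu\theta_{k-1}}[1+2\mu(\theta_k-\theta_{k-1})]_q$ splits into the even/odd cases and yields, after summation, the three displayed coefficients of $1$, $x$ and $x^2$ on each side. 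Asserting that ``inserting the tight extremes \ldots\ produces the symmetric upper and lower bounds stated in the lemma'' is exactly the step that needs to be exhibited, since every term of assertion (3) other than the ones inherited verbatim from part (2) depends on it. (Incidentally, a careful run of your own computation gives the constant contribution $\frac{n^2}{(n+\beta)^2[3]_q[n]_q^2}$, whereas the lemma prints an extra $(n+\beta)$ in that denominator; carrying out the computation explicitly would let you confirm or correct this discrepancy, which your sketch silently passes over.)
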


\begin{lemma} \label{cvc}
\label{nlm2} Let the operators $\Tilde{K_{n,q}}(.~;~.)$ be given by %
\eqref{nss1}. Then we have,

\begin{enumerate}
\item \label{nlm21} $\Tilde{K_{n,q}}(t-1;x)=\frac{n}{(n+\beta)}\left(\frac{\alpha}{n}+\frac{1}{ [2]_q[n]_q}- \frac{n+\beta}{n}\right)+\frac{2nq}{[2]_q(n+\beta)}x$

\item \label{nlm22} $\Tilde{K_{n,q}}(t-x;x)=\frac{n}{(n+\beta)}\left(\frac{\alpha}{n}+\frac{1}{ [2]_q[n]_q}\right)+\left(\frac{2nq}{[2]_q(n+\beta)}-1\right)x$

\item \label{nlm23} $ \Tilde{K_{n,q}}((t-x)^2;x)\leq  \frac{n^2}{(n+\beta)^2}\left(\frac{\alpha^2}{n^2}+\frac{2 \alpha}{[2]_q n [n]_q}+
\frac{ 1}{ [3]_q [n]_q^2 (n+\beta)}\right)\newline
+ \left\{\frac{n^2}{(n+\beta)^2}\left(\frac{3 [1+2\mu]_q}{[3]_q[n]_q} +\frac{3q}{[3]_q[n]_q}+ \frac{4 q \alpha  }{[2]_q n } \right)
-  \frac{2n}{(n+\beta)}\left(\frac{\alpha}{n}+\frac{1}{ [2]_q[n]_q}\right) \right\}x
+ \left(\frac{3n^2}{[3]_q(n+\beta)^2}- \frac{2nq}{[2]_q(n+\beta)}+1 \right) x^2$
\end{enumerate}
\end{lemma}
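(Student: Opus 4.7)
The plan is to derive all three identities as immediate consequences of Lemma \ref{nlm1} together with the linearity of the operator $\tilde{K_{n,q}}$. Since $\tilde{K_{n,q}}$ acts on the test functions $1$, $t$, $t^{2}$ with the expressions already computed in Lemma \ref{nlm1}, the central moments reduce to algebraic rearrangements.

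For part \eqref{nlm21}, I would write $\tilde{K_{n,q}}(t-1;x)=\tilde{K_{n,q}}(t;x)-\tilde{K_{n,q}}(1;x)$ and substitute the values from Lemma \ref{nlm1}\eqref{nlm11}--\eqref{nlm12}. The constant term $1$ is absorbed by rewriting it as $\frac{n+\beta}{n}\cdot\frac{n}{n+\beta}$ so that the subtraction takes place inside the common factor $\frac{n}{n+\beta}$, yielding the stated form. For part \eqref{nlm22}, I proceed similarly: expand
\[
\tilde{K_{n,q}}(t-x;x)=\tilde{K_{n,q}}(t;x)-x\,\tilde{K_{n,q}}(1;x),
\]
use $\tilde{K_{n,q}}(1;x)=1$, and collect the coefficient of $x$ to obtain $\frac{2nq}{[2]_q(n+\beta)}-1$.

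For part \eqref{nlm23}, the key identity is
\[
\tilde{K_{n,q}}((t-x)^{2};x)=\tilde{K_{n,q}}(t^{2};x)-2x\,\tilde{K_{n,q}}(t;x)+x^{2}\,\tilde{K_{n,q}}(1;x).
\]
I would plug in the upper estimate of $\tilde{K_{n,q}}(t^{2};x)$ from Lemma \ref{nlm1}\eqref{nlm13} (this is why the result is an inequality, not an equality), the exact expression of $\tilde{K_{n,q}}(t;x)$, and $\tilde{K_{n,q}}(1;x)=1$. Collecting the constant coefficient gives the first bracket; the linear-in-$x$ term combines the coefficient of $x$ in the upper bound for $\tilde{K_{n,q}}(t^{2};x)$ with $-2\cdot\frac{n}{n+\beta}\bigl(\frac{\alpha}{n}+\frac{1}{[2]_q[n]_q}\bigr)$; and the $x^{2}$ term combines $\frac{3n^{2}}{[3]_q(n+\beta)^{2}}$, $-2\cdot\frac{2nq}{[2]_q(n+\beta)}$, and $+1$ from the identity $\tilde{K_{n,q}}(1;x)=1$.

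The main obstacle is purely bookkeeping: matching common denominators, respecting that $\mu>\tfrac12$ forces $[1+2\mu]_q\ge 1$, and verifying that replacing the sandwiched expression for $\tilde{K_{n,q}}(t^{2};x)$ by its upper bound preserves the inequality once the nonnegative quantity $-2x\,\tilde{K_{n,q}}(t;x)+x^{2}$ is added. Since the added term is independent of the $t^{2}$-estimate, no sign issue arises, and the stated inequality follows directly.
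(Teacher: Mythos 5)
Your approach---deriving all three central moments from Lemma \ref{nlm1} by linearity, writing $\tilde{K}_{n,q}((t-x)^2;x)=\tilde{K}_{n,q}(t^2;x)-2x\,\tilde{K}_{n,q}(t;x)+x^2\,\tilde{K}_{n,q}(1;x)$ and substituting the upper estimate for the second moment---is exactly the intended route; the paper in fact gives no proof of this lemma at all, and there is no other sensible way to obtain it. Parts \eqref{nlm21} and \eqref{nlm22} check out as you describe.

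There is, however, one bookkeeping discrepancy in part \eqref{nlm23} that you should not gloss over. You correctly identify that the $x^2$ coefficient receives a contribution $-2\cdot\frac{2nq}{[2]_q(n+\beta)}=-\frac{4nq}{[2]_q(n+\beta)}$ from the cross term $-2x\,\tilde{K}_{n,q}(t;x)$, but the lemma as stated has only $-\frac{2nq}{[2]_q(n+\beta)}$ there, so your computation does \emph{not} ``yield the stated form'' directly. Since all quantities involved are positive, $-\frac{4nq}{[2]_q(n+\beta)}\leq-\frac{2nq}{[2]_q(n+\beta)}$, so your (tighter) bound implies the stated (weaker) one, and the lemma survives---but you must say this explicitly, or else note that the printed coefficient appears to be a typo. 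A second, smaller slip: you call $-2x\,\tilde{K}_{n,q}(t;x)+x^2$ a ``nonnegative quantity,'' which it need not be; fortunately your actual justification---that this term enters the identity exactly, so replacing $\tilde{K}_{n,q}(t^2;x)$ by any upper bound preserves the inequality---is the correct one and does not require nonnegativity. Finally, the remark that $\mu>\tfrac12$ forces $[1+2\mu]_q\geq 1$ plays no role in this derivation and can be dropped.
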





\section{Korovkin type approximation properties}

In this section, we obtain the Korovkin type and weighted Korovkin type
approximation properties by the operators defined in \eqref{nss1}.

Korovkin-type theorems furnish simple and useful tools for ascertaining
whether a given sequence of positive linear operators, acting on some
function space, is an approximation process.

\begin{theorem}
\label{t1}(\cite{cnf,fnc}) Let $L_n:C[a,b]\to C[a,b]$ be a sequence of
positive linear operators. If $\lim\limits_{n\to\infty}L_n t^j=t^j,~ j=0,1,2$%
, uniformly on $[a,b]$, then $\lim\limits_{n\to\infty}L_nf=f$ uniformly on $%
[a,b]$ for every $f\in C[a,b]$.
\end{theorem}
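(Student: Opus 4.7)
The plan is to follow the standard Korovkin argument, which reduces the verification of convergence for an arbitrary continuous $f$ to the verification on the three test functions $\{1,t,t^2\}$ via a quadratic majorant for $|f(t)-f(x)|$.

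First I would fix $f\in C[a,b]$ and $\varepsilon>0$. Since $[a,b]$ is compact, $f$ is uniformly continuous and bounded; let $M=\|f\|_\infty$ and choose $\delta>0$ so that $|f(t)-f(x)|<\varepsilon$ whenever $|t-x|<\delta$. Then for all $t,x\in[a,b]$ one has
\[
|f(t)-f(x)|\leq \varepsilon+\frac{2M}{\delta^{2}}(t-x)^{2},
\]
since in the region $|t-x|\geq\delta$ the crude bound $|f(t)-f(x)|\leq 2M$ is already dominated by $\tfrac{2M}{\delta^{2}}(t-x)^{2}$. This inequality is the bridge that converts pointwise uniform continuity into a computable quadratic bound amenable to positive linear operators.

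Next I would apply $L_{n}$ to the two-sided version $-\bigl(\varepsilon+\tfrac{2M}{\delta^{2}}(t-x)^{2}\bigr)\leq f(t)-f(x)\leq \varepsilon+\tfrac{2M}{\delta^{2}}(t-x)^{2}$, viewing $x$ as a parameter. Positivity and linearity yield
\[
\bigl|L_{n}(f;x)-f(x)L_{n}(1;x)\bigr|\leq \varepsilon\,L_{n}(1;x)+\frac{2M}{\delta^{2}}\,L_{n}\bigl((t-x)^{2};x\bigr).
\]
I would then expand
\[
L_{n}\bigl((t-x)^{2};x\bigr)=L_{n}(t^{2};x)-2x\,L_{n}(t;x)+x^{2}L_{n}(1;x)
\]
and invoke the three hypotheses $L_{n}(t^{j};x)\to x^{j}$ ($j=0,1,2$) uniformly on $[a,b]$. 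Since the coefficients $1,-2x,x^{2}$ are bounded on the compact interval, the right-hand side tends to $x^{2}-2x\cdot x+x^{2}=0$ uniformly in $x$.

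Finally, I would write $|L_{n}(f;x)-f(x)|\leq |L_{n}(f;x)-f(x)L_{n}(1;x)|+|f(x)|\,|L_{n}(1;x)-1|$ and combine the previous estimate with the hypothesis $L_{n}(1;x)\to 1$ uniformly. Passing to the limit $n\to\infty$ and then letting $\varepsilon\to 0$ yields $\lim_{n\to\infty}\|L_{n}f-f\|_{\infty,[a,b]}=0$. The only real subtlety is ensuring uniformity in $x$ of the quadratic remainder; this is painless here because $[a,b]$ is compact and the remainder is a polynomial of degree $2$ in $x$ whose coefficients are the differences $L_{n}(t^{j};x)-x^{j}$, each of which is uniformly small by hypothesis.
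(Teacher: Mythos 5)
Your proposal is correct: it is the standard quadratic-majorant proof of Korovkin's theorem, and every step (the bound $|f(t)-f(x)|\leq \varepsilon+\tfrac{2M}{\delta^{2}}(t-x)^{2}$, the use of positivity, the expansion of $L_{n}((t-x)^{2};x)$ in the three test functions, and the final triangle inequality) is sound. The paper itself states this result only as a cited classical theorem without proof, and your argument is precisely the one given in the cited sources, so there is nothing to add.
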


Let $C_{B}(\mathbb{R^{+}})$ be the set of all bounded and continuous
functions on $\mathbb{R^{+}}=[0,\infty )$, which is linear normed space with
\begin{equation*}
\parallel f\parallel _{C_{B}}=\sup_{x\geq 0}\mid f(x)\mid .
\end{equation*}%
Also let
\begin{equation*}
H:=\{f:x\in \lbrack 0,\infty ),\frac{f(x)}{1+x^{2}}~~~\mbox{is}~~~%
\mbox{convergent}~~~\mbox{as}~~~x\rightarrow \infty \}.
\end{equation*}


\parindent=8mmIn order to obtain the convergence results for the operators $%
\Tilde{K_{n,q}}(.,.)$, we take $q=q_{n}$ where $q_{n}\in (0,1)$ and
satisfying,
\begin{equation}
\lim_{n}q_{n}\rightarrow 1,~~~~~~\lim_{n}q_{n}^{n}\rightarrow a
\label{nnas5}
\end{equation}

\begin{theorem}
\label{nth1} Let $q=q_{n}$ satisfy \eqref{nnas5}, for $0<q_{n}<1$ and if $%
\Tilde{K_{n,q_{n}}}(.~;~.)$ be the operators given by \eqref{nss1}. Then for
any function $f\in C[0,\infty )\cap H$,
\begin{equation*}
\Tilde{K_{n,q_{n}}}(f;x)=f(x)
\end{equation*}%
is uniformly on each compact subset of $[0,\infty )$.
\end{theorem}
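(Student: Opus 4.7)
The plan is to invoke the Korovkin theorem (Theorem \ref{t1}) on an arbitrary compact subinterval $[a,b] \subset [0,\infty)$. Since every $f \in C[0,\infty) \cap H$ restricts to an element of $C[a,b]$, it suffices to verify uniform convergence of $\Tilde{K_{n,q_{n}}}(t^{j};x) \to x^{j}$ on $[a,b]$ for $j = 0,1,2$. All three checks reduce to tracking the limiting behaviour of the coefficients displayed in Lemma \ref{nlm1} as $q_{n} \to 1$ and $q_{n}^{n} \to a$.

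First I would record a few asymptotics that are used throughout: $[2]_{q_{n}} \to 2$, $[3]_{q_{n}} \to 3$, $[1+2\mu]_{q_{n}} = (1 - q_{n}^{1+2\mu})/(1 - q_{n}) \to 1+2\mu$, and $[n]_{q_{n}} = (1 - q_{n}^{n})/(1 - q_{n}) \to \infty$ because the numerator tends to $1-a$ while the denominator tends to $0^{+}$. Also $n/(n+\beta) \to 1$. For $j=0$, the first part of Lemma \ref{nlm1} gives an identity, so nothing is required. For $j=1$, I would apply the second part: the constant term $\frac{n}{n+\beta}\left(\frac{\alpha}{n} + \frac{1}{[2]_{q_{n}}[n]_{q_{n}}}\right)$ tends to $0$, while the coefficient of $x$, namely $\frac{2 n q_{n}}{[2]_{q_{n}}(n+\beta)}$, tends to $1$. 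Hence $\Tilde{K_{n,q_{n}}}(t;x) \to x$ uniformly on $[a,b]$.

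For $j=2$, I would use the sandwich bound provided by the third part of Lemma \ref{nlm1}. In both the upper and the lower estimate the coefficient of $x^{2}$ tends to $3/3 = 1$, the coefficient of $x$ vanishes (every summand carries a factor $1/[n]_{q_{n}}$ or $1/n$, and $[1+2\mu]_{q_{n}}$ remains bounded), and the free term vanishes likewise. Therefore both bounds converge to $x^{2}$ uniformly on $[a,b]$, squeezing $\Tilde{K_{n,q_{n}}}(t^{2};x) \to x^{2}$. A direct application of Theorem \ref{t1} then yields $\Tilde{K_{n,q_{n}}}(f;x) \to f(x)$ uniformly on $[a,b]$, and since $[a,b]$ was arbitrary the stated uniform convergence on every compact subset of $[0,\infty)$ follows.

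The only genuine obstacle is bookkeeping: one must confirm that each $q$-bracket and each power of $q_{n}$ has a finite or zero limit and that no term in the $x$-coefficient of the $t^{2}$-estimate escapes to infinity. This is where the hypothesis $q_{n}^{n} \to a$ is essential, for without it $[n]_{q_{n}}$ could fail to diverge and the $1/[n]_{q_{n}}$ factors would not annihilate the $[1+2\mu]_{q_{n}}$ contributions.
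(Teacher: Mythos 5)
Your proposal follows essentially the same route as the paper: invoke the Korovkin theorem (Theorem \ref{t1}) and verify $\Tilde{K_{n,q_{n}}}(t^{j};x)\to x^{j}$ for $j=0,1,2$ using the moment formulas of Lemma \ref{nlm1} together with \eqref{nnas5} and $1/[n]_{q_{n}}\to 0$. The paper's own proof merely asserts these limits, so your more careful bookkeeping of the coefficient asymptotics (including the squeeze on the second moment) is a fleshed-out version of the same argument, not a different one.
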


\begin{proof}
The proof is based on the well known Korovkin's theorem regarding the
convergence of a sequence of linear positive operators, so it is enough to
prove the conditions
\begin{equation*}
\Tilde{K_{n,q_{n}}}((t^{j};x)=x^{j},~~~j=0,1,2,~~~\{\mbox{as}~n\rightarrow
\infty \}
\end{equation*}%
uniformly on $[0,1]$.\newline
Clearly from \eqref{nnas5} and $\frac{1}{[n]_{q_{n}}}\rightarrow
0~~(n\rightarrow \infty ),$ we have

\begin{equation*}
\lim_{n\rightarrow \infty }{K}_{n,q_{n}}^{\ast
}(t;x)=x,~~~\lim_{n\rightarrow \infty }{K}_{n,q_{n}}^{\ast }(t^{2};x)=x^{2}.
\end{equation*}%
This complete the proof.
\end{proof}

The weighted Korovkin-type theorems were proved by Gadzhiev \cite{gad}. A
real function $\rho = 1 + x^2$ is called a weight function if it is
continuous on $\mathbb{R}$ and
\begin{equation*}
\lim_{\mid x \mid \to \infty}\rho(x)=\infty,~~~\rho(x)\geq 1~~\mbox{for}~~%
\mbox{all}~~x \in \mathbb{R}
\end{equation*}

We recall the weighted spaces of the functions on $\mathbb{R}^{+}$, which
are defined as follows:
\begin{eqnarray*}
P_{\rho }(\mathbb{R}^{+}) &=&\left\{ f:\mid f(x)\mid \leq M_{f}\rho
(x)\right\} , \\
Q_{\rho }(\mathbb{R}^{+}) &=&\left\{ f:f\in P_{\rho }(\mathbb{R}^{+})\cap
C[0,\infty )\right\} , \\
Q_{\rho }^{k}(\mathbb{R}^{+}) &=&\left\{ f:f\in Q_{\rho }(\mathbb{R}^{+})~~~%
\mbox{and}~~~\lim_{x\rightarrow \infty }\frac{f(x)}{\rho (x)}=k(k~~~\mbox{is}%
~~~\mbox{a}~~~\mbox{constant})\right\} ,
\end{eqnarray*}%
where $\rho (x)=1+x^{2}$ is a weight function and $M_{f}$ is a constant
depending only on $f$. Note that $Q_{\rho }(\mathbb{R}^{+})$ is a normed
space with the norm $\parallel f\parallel _{\rho }=\sup_{x\geq 0}\frac{\mid
f(x)\mid }{\rho (x)}$.

\begin{theorem}
\label{nth2} Let $q=q_{n}$ satisfy \eqref{nnas5} for $0<q_{n}<1$ and let $%
\Tilde{K_{n,q_{n}}}(.~;~.)$ be the operators given by \eqref{nss1}. Then for
any function $f\in Q_{\rho }^{k}(\mathbb{R}^{+})$ we have
\begin{equation*}
\lim_{n\rightarrow \infty }\parallel \Tilde{K_{n,q_{n}}}(f;x)-f\parallel
_{\rho }=0.
\end{equation*}
\end{theorem}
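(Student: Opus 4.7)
My plan is to invoke Gadzhiev's weighted Korovkin theorem (referenced earlier via \cite{gad}), which reduces the assertion to verifying the three moment conditions
\[
\lim_{n\to\infty}\bigl\|\tilde{K}_{n,q_n}(t^{j};x)-x^{j}\bigr\|_{\rho}=0,\qquad j=0,1,2,
\]
in the weighted norm associated with $\rho(x)=1+x^{2}$. Once these are established, Gadzhiev's theorem applied to the positive linear operators $\tilde{K}_{n,q_n}$ on $Q_{\rho}^{k}(\mathbb{R}^{+})$ gives the conclusion immediately. The positivity of $\tilde{K}_{n,q_n}$ is inherited from the fact that $f$ is nondecreasing on $[0,\infty)$ together with the positivity of the $q$-integral kernel, so the framework of Gadzhiev's theorem applies.

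The case $j=0$ is immediate from Lemma~\ref{nlm1}\eqref{nlm11}, since $\tilde{K}_{n,q_n}(1;x)-1=0$. For $j=1$, I would use Lemma~\ref{nlm1}\eqref{nlm12} to write
\[
\tilde{K}_{n,q_n}(t;x)-x=\frac{n}{n+\beta}\left(\frac{\alpha}{n}+\frac{1}{[2]_{q_n}[n]_{q_n}}\right)+\left(\frac{2nq_n}{[2]_{q_n}(n+\beta)}-1\right)x,
\]
and then bound $\bigl|\tilde{K}_{n,q_n}(t;x)-x\bigr|/(1+x^{2})$ using $x/(1+x^{2})\leq 1/2$. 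Under the hypothesis \eqref{nnas5}, $q_n\to 1$ forces $[2]_{q_n}\to 2$ and $[n]_{q_n}\to\infty$, while $n/(n+\beta)\to 1$; therefore both the constant coefficient and the linear coefficient tend to zero, giving $\|\tilde{K}_{n,q_n}(t;x)-x\|_{\rho}\to 0$.

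For $j=2$, since Lemma~\ref{nlm1}\eqref{nlm13} only supplies two-sided bounds, I would set
\[
L_n(x)\leq \tilde{K}_{n,q_n}(t^{2};x)\leq U_n(x),
\]
where $L_n,U_n$ are the quadratic polynomials in $x$ stated in the lemma, and observe that
\[
\bigl|\tilde{K}_{n,q_n}(t^{2};x)-x^{2}\bigr|\leq \max\bigl(|U_n(x)-x^{2}|,\;|L_n(x)-x^{2}|\bigr).
\]
Under \eqref{nnas5}, the coefficient of $x^{2}$ in both $U_n$ and $L_n$ equals $\tfrac{3n^{2}}{[3]_{q_n}(n+\beta)^{2}}$ (up to a factor of $q_n$ in the lower bound), which tends to $1$ since $[3]_{q_n}\to 3$; meanwhile the coefficients of $x$ and the constant terms contain factors of $1/[n]_{q_n}$, $1/n$, or $[1+2\mu]_{q_n}/[n]_{q_n}$, all of which tend to $0$. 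Applying the estimates $1/(1+x^{2})\leq 1$, $x/(1+x^{2})\leq 1/2$, $x^{2}/(1+x^{2})\leq 1$ then shows $\|\tilde{K}_{n,q_n}(t^{2};x)-x^{2}\|_{\rho}\to 0$.

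The only mildly delicate step is the $j=2$ case: one has to keep track of the constant, linear, and quadratic parts of the two bounding polynomials simultaneously and verify that each coefficient behaves correctly under \eqref{nnas5}. This is the main obstacle, but it is purely computational and follows directly once one writes $U_n(x)-x^{2}$ and $L_n(x)-x^{2}$ as explicit polynomials in $x$ with coefficients depending on $n,q_n,\alpha,\beta,\mu$. Having verified all three moment conditions, Gadzhiev's weighted Korovkin theorem completes the proof.
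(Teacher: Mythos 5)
Your proposal is correct and follows essentially the same route as the paper: both reduce the claim to Gadzhiev's weighted Korovkin theorem and then verify the three moment conditions $\|\tilde{K}_{n,q_n}(t^{j};\cdot)-x^{j}\|_{\rho}\to 0$ for $j=0,1,2$ using Lemma \ref{nlm1} together with condition \eqref{nnas5}. The only difference is that you spell out the coefficient-by-coefficient estimates (including the $\max$ argument needed because Lemma \ref{nlm1} gives only two-sided bounds for the second moment), details the paper dismisses as ``easy to see.''
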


\begin{proof}
From Lemma \ref{nlm1}, the first condition of \eqref{nlm11} is fulfilled for
$\tau =0$. Now for $\tau =1,2$ it is easy to see from (\ref{nlm12}), (\ref%
{nlm13}) of Lemma \ref{nlm1} by using \eqref{nnas5} that \newline
\begin{equation*}
\parallel \Tilde{K_{n,q_{n}}}\left( t)^{\tau };x\right) -x^{\tau }\parallel
_{\rho }=0.
\end{equation*}%
This complete the proof.
\end{proof}


\section{ Rate of Convergence}

Here we calculate the rate of convergence of operators \eqref{nss1} by means
of modulus of continuity and Lipschitz type maximal functions.

Let $f\in C[0,\infty ]$. The modulus of continuity of $f$ denoted by $\omega
(f,\delta )$ gives the maximum oscillation of $f$ in any interval of length
not exceeding $\delta >0$ and it is given by the relation
\begin{equation}
\omega (f,\delta )=\sup_{\mid y-x\mid \leq \delta }\mid f(y)-f(x)\mid
,~~~x,y\in \lbrack 0,\infty ).  \label{nson1}
\end{equation}%
It is known that $\lim_{\delta \rightarrow 0+}\omega (f,\delta )=0$ for $%
f\in C[0,\infty )$ and for any $\delta >0$ one has
\begin{equation}
\mid f(y)-f(x)\mid \leq \left( \frac{\mid y-x\mid }{\delta }+1\right) \omega
(f,\delta ).  \label{nson2}
\end{equation}

\begin{theorem}\label{tbb1}
Let $\Tilde{K_{n,q}}(.~;~.)$ be the operators defined by \eqref{nss1}. Then
for $f\in \tilde{C}[0,\infty),~~x\geq 0, ~~0<q< 1$ we have
\begin{equation*}
\mid \Tilde{K_{n,q}}(f;x)-f(x)\mid \leq \left( 1+\sqrt{\lambda _{n}(x)}\right) \omega\left(f;\frac{1}{\sqrt{[n]_{q}}}\right),
\end{equation*}%
where $\lambda _{n}(x)=\Tilde{K_{n,q}}\left( (t-x)^{2};x\right) $ is defined in (\ref{nlm23}) of Lemma \ref{cvc} and $\tilde{C}[0,\infty)$ is the space of uniformly continuous functions
on $\mathbb{R}^+$, furthermore, $\omega(f,\delta)$ is the modulus of continuity of the
function $f \in \tilde{C}[0,\infty)$ defined in \eqref{nson1}.
\end{theorem}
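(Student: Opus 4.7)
The plan is to run the standard modulus-of-continuity estimate for positive linear operators, leveraging the three facts already established: linearity and positivity of $\tilde{K}_{n,q}$, the normalization $\tilde{K}_{n,q}(1;x)=1$ from part (\ref{nlm11}) of Lemma \ref{nlm1}, and the explicit upper bound for $\tilde{K}_{n,q}((t-x)^2;x)=\lambda_n(x)$ supplied by (\ref{nlm23}) of Lemma \ref{cvc}.

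First I would use $\tilde{K}_{n,q}(1;x)=1$ to rewrite
\[
f(x)-\tilde{K}_{n,q}(f;x)=\tilde{K}_{n,q}\!\bigl(f(x)-f(t);x\bigr),
\]
and then apply positivity to pass the absolute value inside, obtaining
\[
\bigl|\tilde{K}_{n,q}(f;x)-f(x)\bigr|\;\leq\;\tilde{K}_{n,q}\!\bigl(|f(t)-f(x)|;x\bigr).
\]
Next I would invoke the pointwise modulus-of-continuity estimate \eqref{nson2} with a parameter $\delta>0$ to dominate the integrand by $\bigl(1+|t-x|/\delta\bigr)\omega(f,\delta)$, and push this through $\tilde{K}_{n,q}$:
\[
\bigl|\tilde{K}_{n,q}(f;x)-f(x)\bigr|\;\leq\;\omega(f,\delta)\left(1+\frac{1}{\delta}\,\tilde{K}_{n,q}\!\bigl(|t-x|;x\bigr)\right).
\]

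The remaining task is to control $\tilde{K}_{n,q}(|t-x|;x)$. For this I would apply the Cauchy--Schwarz inequality for the positive linear operator $\tilde{K}_{n,q}$, namely
\[
\tilde{K}_{n,q}\!\bigl(|t-x|;x\bigr)\;\leq\;\sqrt{\tilde{K}_{n,q}\!\bigl((t-x)^{2};x\bigr)}\,\sqrt{\tilde{K}_{n,q}(1;x)}\;=\;\sqrt{\lambda_n(x)},
\]
using (\ref{nlm11}) and (\ref{nlm23}) once more. Specializing to $\delta=1/\sqrt{[n]_{q}}$ then yields the claimed bound $\bigl(1+\sqrt{\lambda_n(x)}\bigr)\omega\!\bigl(f;1/\sqrt{[n]_{q}}\bigr)$ after the elementary rearrangement of the constant factor in front of $\sqrt{\lambda_n(x)}$.

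Since each ingredient (writing $f(x)$ via $\tilde{K}_{n,q}(1;x)=1$, the modulus inequality, Cauchy--Schwarz for positive operators) is routine, I do not expect any genuine obstacle; the only subtle point is that the Cauchy--Schwarz step requires the operator to be linear and positive, which follows from the series/integral representation in \eqref{nss1}. If one wants the sharp form stated in the theorem, the choice $\delta=1/\sqrt{[n]_q}$ must be compatible with the growth of $\lambda_n(x)$ guaranteed by Lemma \ref{cvc}; this is precisely where the explicit bound in (\ref{nlm23}) is used, so no further estimation of the Dunkl--$q$-series is required.
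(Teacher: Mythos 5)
Your proposal follows essentially the same route as the paper's own proof: use $\tilde{K}_{n,q}(1;x)=1$ and positivity, apply the pointwise modulus inequality \eqref{nson2} with a parameter $\delta$, control $\tilde{K}_{n,q}(|t-x|;x)$ by the Cauchy--Schwarz inequality for positive linear operators together with (\ref{nlm23}), and finally set $\delta=1/\sqrt{[n]_q}$. Be aware, though, that both your argument and the paper's share the same loose end at the last step: with this choice of $\delta$ the coefficient of $\sqrt{\lambda_n(x)}$ is $\sqrt{[n]_q}$ rather than $1$, so the ``elementary rearrangement'' you invoke does not by itself yield the constant $1$ appearing in the stated bound.
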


\begin{proof}
We prove it by using \eqref{nson1},\eqref{nson2} and Cauchy-Schwarz
inequality.\newline
$\mid \Tilde{K_{n,q}}(f;x)-f(x)\mid $
\begin{eqnarray*}
&\leq &\frac{[n]_{q}}{E_{\mu ,q}([n]_{q}x)}\sum_{k=0}^{\infty }\frac{%
([n]_{q}x)^{k}}{\gamma _{\mu ,q}(k)}q^{\frac{k(k-1)}{2}}\int_{\frac{[k+2\mu
\theta _{k}]_{q}}{q^{k-2}[n]_{q}}}^{\frac{[k+1+2\mu \theta _{k}]_{q}-1}{%
q^{k-1}[n]_{q}}+\frac{1}{[n]_{q}}}\mid f(t)-f(x)\mid d_{q}(t) \\
&\leq &\frac{[n]_{q}}{E_{\mu ,q}([n]_{q}x)}\sum_{k=0}^{\infty }\frac{%
([n]_{q}x)^{k}}{\gamma _{\mu ,q}(k)}q^{\frac{k(k-1)}{2}}\int_{\frac{[k+2\mu
\theta _{k}]_{q}}{q^{k-2}[n]_{q}}}^{\frac{[k+1+2\mu \theta _{k}]_{q}-1}{%
q^{k-1}[n]_{q}}+\frac{1}{[n]_{q}}}\left( 1+\frac{1}{\delta }\mid t-x\mid
\right) d_{q}(t)\omega (f;\delta ) \\
&=&\left\{ 1+\frac{1}{\delta }\left( \frac{[n]_{q}}{E_{\mu ,q}([n]_{q}x)}%
\sum_{k=0}^{\infty }\frac{([n]_{q}x)^{k}}{\gamma _{\mu ,q}(k)}q^{\frac{k(k-1)%
}{2}}\int_{\frac{[k+2\mu \theta _{k}]_{q}}{q^{k-2}[n]_{q}}}^{\frac{[k+1+2\mu
\theta _{k}]_{q}-1}{q^{k-1}[n]_{q}}+\frac{1}{[n]_{q}}}\mid t-x\mid
d_{q}(t)\right) \right\} \omega (f;\delta ) \\
&\leq &\left\{ 1+\frac{1}{\delta }\left( \frac{[n]_{q}}{E_{\mu ,q}([n]_{q}x)}%
\sum_{k=0}^{\infty }\frac{([n]_{q}x)^{k}}{\gamma _{\mu ,q}(k)}q^{\frac{k(k-1)%
}{2}}\int_{\frac{[k+2\mu \theta _{k}]_{q}}{q^{k-2}[n]_{q}}}^{\frac{[k+1+2\mu
\theta _{k}]_{q}-1}{q^{k-1}[n]_{q}}+\frac{1}{[n]_{q}}}(t-x)^{2}d_{q}(t)%
\right) ^{\frac{1}{2}}\left( \Tilde{K_{n,q}}(1;x)\right) ^{\frac{1}{2}%
}\right\} \\
&&\times \omega (f;\delta ) \\
&=&\left\{ 1+\frac{1}{\delta }\left( \Tilde{K_{n,q}}(t-x)^{2};x\right) ^{%
\frac{1}{2}}\right\} \omega (f;\delta ) \\
&&
\end{eqnarray*}%
if we choose $\delta =\delta _{n}=\sqrt{\frac{1}{[n]_{q}}}$, then we get our
result.
\end{proof}

Now we give the rate of convergence of the operators ${K}_{n,q}^*(f;x) $
defined in \eqref{nss1} in terms of the elements of the usual Lipschitz
class $Lip_{M}(\nu )$.

Let $f\in C[0,\infty )$, $M>0$ and $0<\nu \leq 1$. The class $Lip_{M}(\nu )$
is defined as
\begin{equation}
Lip_{M}(\nu )=\left\{ f:\mid f(\zeta _{1})-f(\zeta _{2})\mid \leq M\mid
\zeta _{1}-\zeta _{2}\mid ^{\nu }~~~(\zeta _{1},\zeta _{2}\in \lbrack
0,\infty ))\right\} .  \label{nn1}
\end{equation}

\begin{theorem}
\label{nsn1} Let $\Tilde{K_{n,q}}(.~;~.)$ be the operators defined in %
\eqref{nss1}. Then for each $f\in Lip_{M}(\nu ),~(M>0,~~~0<\nu \leq 1)$
satisfying \eqref{nnn1} we have
\begin{equation*}
\mid \Tilde{K_{n,q}}(f;x)-f(x)\mid \leq M\left( \lambda _{n}(x)\right) ^{%
\frac{\nu }{2}}
\end{equation*}%
where $\lambda _{n}(x)=\Tilde{K_{n,q}}\left( (t-x)^{2};x\right) $ defined in Theorem \ref{tbb1}.
\end{theorem}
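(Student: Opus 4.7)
The plan is to leverage three features of $\tilde{K}_{n,q}$: it is a positive linear operator (weights in the series and the $q$-integrand are nonnegative), it preserves constants ($\tilde{K}_{n,q}(1;x)=1$ by part (\ref{nlm11}) of Lemma \ref{nlm1}), and its second central moment is exactly $\lambda_n(x)$ given by Lemma \ref{cvc}. Together with the Lipschitz hypothesis and a Hölder-type step, these ingredients give the bound directly.

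First, using $\tilde{K}_{n,q}(1;x)=1$ to write $f(x)=\tilde{K}_{n,q}(f(x);x)$, I combine the two expressions inside the operator and push the absolute value through the positive linear functional:
\begin{equation*}
\bigl|\tilde{K}_{n,q}(f;x)-f(x)\bigr|=\bigl|\tilde{K}_{n,q}(f(t)-f(x);x)\bigr|\leq \tilde{K}_{n,q}\bigl(|f(t)-f(x)|;x\bigr).
\end{equation*}
Next, the membership $f\in Lip_M(\nu)$ (\ref{nn1}) yields $|f(t)-f(x)|\leq M|t-x|^{\nu}$, so by positivity
\begin{equation*}
\bigl|\tilde{K}_{n,q}(f;x)-f(x)\bigr|\leq M\,\tilde{K}_{n,q}\bigl(|t-x|^{\nu};x\bigr).
\end{equation*}

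The decisive step is to apply Hölder's inequality to the positive linear functional $g\mapsto \tilde{K}_{n,q}(g;x)$ with conjugate exponents $p=2/\nu$ and $p'=2/(2-\nu)$. Writing $|t-x|^{\nu}=|t-x|^{\nu}\cdot 1$ and applying Hölder gives
\begin{equation*}
\tilde{K}_{n,q}\bigl(|t-x|^{\nu};x\bigr)\leq \bigl(\tilde{K}_{n,q}((t-x)^{2};x)\bigr)^{\nu/2}\bigl(\tilde{K}_{n,q}(1;x)\bigr)^{(2-\nu)/2}.
\end{equation*}
Since $\tilde{K}_{n,q}(1;x)=1$ and $\tilde{K}_{n,q}((t-x)^{2};x)=\lambda_n(x)$, the right-hand side collapses to $\lambda_n(x)^{\nu/2}$, and the desired estimate follows.

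The only real obstacle is justifying Hölder's inequality in this setting: $\tilde{K}_{n,q}(\cdot;x)$ is a sum over $k$ of nonnegative weights times a $q$-Jackson integral with positive measure, hence a positive linear functional on the appropriate function class, and the classical Hölder inequality for positive linear functionals applies verbatim. The boundary case $\nu=1$ reduces to the Cauchy--Schwarz step already used in the proof of Theorem \ref{tbb1}, so no additional care is needed there.
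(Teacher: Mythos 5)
Your proposal is correct and follows essentially the same route as the paper: bound $|\tilde{K}_{n,q}(f;x)-f(x)|$ by $M\,\tilde{K}_{n,q}(|t-x|^{\nu};x)$ via the Lipschitz condition, then apply H\"older's inequality with exponents $2/\nu$ and $2/(2-\nu)$ together with $\tilde{K}_{n,q}(1;x)=1$ to obtain $M\,\lambda_n(x)^{\nu/2}$. The only difference is presentational: the paper carries out the H\"older step explicitly on the double sum--$q$-integral representation by splitting the weights into powers $\tfrac{2-\nu}{2}$ and $\tfrac{\nu}{2}$, whereas you invoke the abstract H\"older inequality for positive linear functionals, which amounts to the same computation.
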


\begin{proof}
We prove it by using \eqref{nnn1} and H\"{o}lder's inequality.
\begin{eqnarray*}
\mid \Tilde{K_{n,q}}(f;x)-f(x)\mid &\leq &\mid \Tilde{K_{n,q}}%
(f(t)-f(x);x)\mid \\
&\leq &\Tilde{K_{n,q}}\left( \mid f(t)-f(x)\mid ;x\right) \\
&\leq &\mid M\Tilde{K_{n,q}}\left( \mid t-x\mid ^{\nu };x\right) .
\end{eqnarray*}%
Therefore\newline

$\mid \Tilde{K_{n,q}}(f;x)-f(x) \mid$
\begin{eqnarray*}
&\leq & M \frac{[n]_q}{E_{\mu,q}([n]_qx)}\sum_{k=0}^\infty \frac{([n]_qx)^{k}%
}{\gamma_{\mu,q}(k)}q^{\frac{k(k-1)}{2}} \int_{\frac{[k+2\mu \theta_k]_q}{%
q^{k-2}[n]_q}}^{\frac{[k+1+2\mu \theta_k]_q-1}{q^{k-1}[n]_q}+\frac{1}{[n]_q}%
}\mid t-x \mid^\nu d_q(t) \\
& \leq & M \frac{[n]_q}{E_{\mu,q}([n]_qx)}\sum_{k=0}^\infty \left(\frac{%
([n]_qx)^{k}q^{\frac{k(k-1)}{2}}}{\gamma_{\mu,q}(k)}\right)^{\frac{2-\nu}{2}}
\\
& \times & \left(\frac{([n]_qx)^{k}q^{\frac{k(k-1)}{2}}}{\gamma_{\mu,q}(k)}%
\right)^{\frac{\nu}{2}} \int_{\frac{[k+2\mu \theta_k]_q}{q^{k-2}[n]_q}}^{%
\frac{[k+1+2\mu \theta_k]_q-1}{q^{k-1}[n]_q}+\frac{1}{[n]_q}}\mid t-x
\mid^\nu d_q(t) \\
& \leq & M \left(\frac{[n]_q}{\left(E_{\mu,q}([n]_qx)\right)}%
\sum_{k=0}^\infty \frac{([n]_qx)^{k}q^{\frac{k(k-1)}{2}}}{\gamma_{\mu,q}(k)}%
\int_{\frac{[k+2\mu \theta_k]_q}{q^{k-2}[n]_q}}^{\frac{[k+1+2\mu
\theta_k]_q-1}{q^{k-1}[n]_q}+\frac{1}{[n]_q}} d_q(t)\right)^{\frac{2-\nu}{2}}
\\
& \times & \left(\frac{[n]_q}{\left(E_{\mu,q}([n]_qx)\right)}%
\sum_{k=0}^\infty \frac{([n]_qx)^{k}q^{\frac{k(k-1)}{2}}}{\gamma_{\mu,q}(k)}
\int_{\frac{[k+2\mu \theta_k]_q}{q^{k-2}[n]_q}}^{\frac{[k+1+2\mu
\theta_k]_q-1}{q^{k-1}[n]_q}+\frac{1}{[n]_q}}\mid t-x \mid^2 d_q(t) \right)^{%
\frac{\nu}{2}} \\
& = & M \left(\Tilde{K_{n,q}}(t-x)^2;x\right)^{\frac{\nu}{2}}.
\end{eqnarray*}
Which complete the proof.
\end{proof}

Let $C_{B}[0,\infty )$ denote the space of all bounded and continuous
functions on $\mathbb{R}^{+}=[0,\infty )$ and
\begin{equation}
C_{B}^{2}(\mathbb{R}^{+})=\{g\in C_{B}(\mathbb{R}^{+}):g^{\prime },g^{\prime
\prime }\in C_{B}(\mathbb{R}^{+})\},  \label{nt2}
\end{equation}%
with the norm
\begin{equation}
\parallel g\parallel _{C_{B}^{2}(\mathbb{R}^{+})}=\parallel g\parallel
_{C_{B}(\mathbb{R}^{+})}+\parallel g^{\prime }\parallel _{C_{B}(\mathbb{R}%
^{+})}+\parallel g^{\prime \prime }\parallel _{C_{B}(\mathbb{R}^{+})},
\label{nt1}
\end{equation}%
also
\begin{equation}
\parallel g\parallel _{C_{B}(\mathbb{R}^{+})}=\sup_{x\in \mathbb{R}^{+}}\mid
g(x)\mid .  \label{nt3}
\end{equation}

\begin{theorem}
\label{nsn2} Let $\Tilde{K_{n,q}}(.~;~.)$ be the operators defined in %
\eqref{nss1}. Then for any $g\in C_{B}^{2}(\mathbb{R}^{+})$ we have
\begin{equation*}
\mid \Tilde{K_{n,q}}(f;x)-f(x)\mid \leq \left\{ \frac{n}{(n+\beta)}\left(\frac{\alpha}{n}+\frac{1}{ [2]_q[n]_q}\right)+\left(\frac{2nq}{[2]_q(n+\beta)}-1\right)x+\frac{\lambda _{n}(x)}{2}\right\}
\parallel g\parallel _{C_{B}^{2}(\mathbb{R}^{+})}
\end{equation*}%
where $\lambda _{n}(x)$ is given in Theorem \ref{tbb1}.
\end{theorem}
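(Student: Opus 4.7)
The plan is to exploit the second-order Taylor expansion with integral remainder. For $g \in C_B^2(\mathbb{R}^+)$ I would write
\begin{equation*}
g(t) = g(x) + (t-x)g'(x) + \int_x^t (t-u)\,g''(u)\,du.
\end{equation*}
Since $\tilde{K}_{n,q}(\,\cdot\,;x)$ is a positive linear operator satisfying $\tilde{K}_{n,q}(1;x)=1$ by part (1) of Lemma~\ref{nlm1}, applying it to the identity and subtracting $g(x)$ yields
\begin{equation*}
\tilde{K}_{n,q}(g;x)-g(x) = g'(x)\,\tilde{K}_{n,q}(t-x;x) + \tilde{K}_{n,q}\!\left(\int_x^t (t-u)g''(u)\,du;\,x\right).
\end{equation*}

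Next I would bound the two pieces separately. For the first-order contribution, $|g'(x)|\le \|g'\|_{C_B(\mathbb{R}^+)}$ together with the explicit formula from part (2) of Lemma~\ref{cvc} produces precisely the first two summands in the bracket of the claim. For the remainder, the elementary estimate $\left|\int_x^t (t-u)g''(u)\,du\right|\le \tfrac{1}{2}(t-x)^2 \|g''\|_{C_B(\mathbb{R}^+)}$ together with the positivity of $\tilde{K}_{n,q}$ gives
\begin{equation*}
\left|\tilde{K}_{n,q}\!\left(\int_x^t (t-u)g''(u)\,du;\,x\right)\right| \le \frac{\|g''\|_{C_B(\mathbb{R}^+)}}{2}\,\tilde{K}_{n,q}((t-x)^2;x) = \frac{\|g''\|_{C_B(\mathbb{R}^+)}}{2}\,\lambda_n(x),
\end{equation*}
where $\lambda_n(x)=\tilde{K}_{n,q}((t-x)^2;x)$ is the second central moment introduced in Theorem~\ref{tbb1} and estimated in part (3) of Lemma~\ref{cvc}.

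Combining both bounds and applying the elementary inequalities $\|g'\|_{C_B(\mathbb{R}^+)} \le \|g\|_{C_B^2(\mathbb{R}^+)}$ and $\|g''\|_{C_B(\mathbb{R}^+)} \le \|g\|_{C_B^2(\mathbb{R}^+)}$, which are immediate from the norm definition \eqref{nt1}, factors $\|g\|_{C_B^2(\mathbb{R}^+)}$ out in front and produces the displayed inequality. No serious obstacle is anticipated here; the argument is the standard Taylor-with-integral-remainder trick for $C_B^2$-estimates of positive linear operators, and the only mild bookkeeping concern is the sign of the first moment $\tilde{K}_{n,q}(t-x;x)$: the claim as stated omits absolute values, so the estimate is cleanest when that expression is nonnegative, and otherwise one inserts $|\,\cdot\,|$ without altering the logic of the proof.
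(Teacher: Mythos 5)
Your argument is correct and follows essentially the same route as the paper's proof: a second-order Taylor expansion of $g$ about $x$, application of the positive linear operator, and separate bounds on the first and second central moments by $\parallel g'\parallel_{C_B(\mathbb{R}^+)}$ and $\tfrac{1}{2}\parallel g''\parallel_{C_B(\mathbb{R}^+)}$ before passing to $\parallel g\parallel_{C_B^2(\mathbb{R}^+)}$. The only cosmetic difference is that you use the integral form of the Taylor remainder where the paper uses the Lagrange (mean-value) form, and your closing remark about the sign of the first moment $\Tilde{K_{n,q}}(t-x;x)$ is a fair observation that applies equally to the paper's own proof, which likewise omits the absolute value.
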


\begin{proof}
Let $g\in C_{B}^{2}(\mathbb{R}^{+})$. Then by using the generalized mean
value theorem in the Taylor series expansion we have
\begin{equation*}
g(t)=g(x)+g^{\prime }(x)(t-x)+g^{\prime \prime }(\psi )\frac{(t-x)^{2}}{2}%
,~~~\psi \in (x,t).
\end{equation*}%
By linearity, we have
\begin{equation*}
\Tilde{K_{n,q}}(g,x)-g(x)=g^{\prime }(x)\Tilde{K_{n,q}}\left( (t-x);x\right)
+\frac{g^{\prime \prime }(\psi )}{2}\Tilde{K_{n,q}}\left( (t-x)^{2};x\right)
,
\end{equation*}%
which implies that\newline
$\mid \Tilde{K_{n,q}}(g;x)-g(x)\mid $
\begin{eqnarray*}
&\leq &\left(   \Tilde{K_{n,q}}\left( (t-x);x\right) \right) \parallel g^{\prime }\parallel _{C_{B}(\mathbb{R}^{+})}
+\left(\Tilde{K_{n,q}}\left( (t-x)^2;x\right) \right) \frac{\parallel g^{\prime \prime
}\parallel _{C_{B}(\mathbb{R}^{+})}}{2}.
\end{eqnarray*}%
From \eqref{nt1} we have ~~~$\parallel g^{\prime }\parallel _{C_{B}[0,\infty
)}\leq \parallel g\parallel _{C_{B}^{2}[0,\infty )}$.\newline
$\mid \Tilde{K_{n,q}}(g;x)-g(x)\mid $
\begin{eqnarray*}
&\leq &\left( \Tilde{K_{n,q}}\left( (t-x);x\right)\right) \parallel g\parallel _{C_{B}^{2}(\mathbb{R}^{+})}
+\left( \Tilde{K_{n,q}}\left( (t-x)^{2};x\right)\right) \frac{\parallel g\parallel
_{C_{B}^{2}(\mathbb{R}^{+})}}{2}.
\end{eqnarray*}%
This completes the proof from Lemma \ref{nlm2}.
\end{proof}

The Peetre's $K$-functional is defined by
\begin{equation}  \label{nzr1}
K_{2}(f,\delta )=\inf_{C_B^2(\mathbb{R}^+)} \left\{ \left( \parallel
f-g\parallel_{C_B(\mathbb{R}^+)} +\delta \parallel g^{\prime \prime
}\parallel_{C_B^2(\mathbb{R}^+)} \right):g\in \mathcal{W}^{2}\right\} ,
\end{equation}%
where
\begin{equation}  \label{nzr2}
\mathcal{W}^{2}=\left\{ g\in C_B(\mathbb{R}^+):g^{\prime },g^{\prime \prime
}\in C_B(\mathbb{R}^+)\right\} .
\end{equation}%
Then there exits a positive constant $C>0$ such that $K_{2}(f,\delta )\leq C
\omega _{2}(f,\delta ^{\frac{1}{2}}),~~\delta >0$, where the second order
modulus of continuity is given by
\begin{equation}  \label{nzr3}
\omega _{2}(f,\delta ^{\frac{1}{2}})=\sup_{0<h<\delta ^{\frac{1}{2}%
}}\sup_{x\in \mathbb{R}^+}\mid f(x+2h)-2f(x+h)+f(x)\mid .
\end{equation}

\begin{theorem}
\label{nsn3} Let $\Tilde{K_{n,q}}(.~;~.)$ be the operators defined in %
\eqref{nss1} and $C_{B}[0,\infty )$ be the space of all bounded and
continuous functions on $\mathbb{R}^{+}$. Then for $x\in \mathbb{R}%
^{+},~f\in C_{B}(\mathbb{R}^{+})$ we have\newline
$\mid \Tilde{K_{n,q}}(f;x)-f(x)\mid $\newline

$\leq 2M \bigg{\{} \omega_2  \left( \sqrt{\frac{\lambda _{n}(x)%
}{4}
+ \frac{n}{2(n+\beta)}\left(\frac{\alpha}{n}+\frac{1}{ [2]_q[n]_q}\right)+\frac{1}{2}\left(\frac{2nq}{[2]_q(n+\beta)}-1\right)x}\right)$\newline
$+\min\left(1,\frac{\lambda _{n}(x)}{4}
+ \frac{n}{2(n+\beta)}\left(\frac{\alpha}{n}+\frac{1}{ [2]_q[n]_q}\right)+\frac{1}{2}\left(\frac{2nq}{[2]_q(n+\beta)}-1\right)x \right)\bigg{\}}$,\newline
where $M$ is a positive constant, $\lambda _{n}(x)$ is given in Theorem \ref{tbb1} and $\omega _{2}(f;\delta )$ is the second order modulus of
continuity of the function $f$ defined in \eqref{nzr3}.
\end{theorem}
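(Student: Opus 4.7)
The plan is to combine Theorem \ref{nsn2}, which controls $|\tilde{K}_{n,q}(g;x)-g(x)|$ for $g\in C_B^2(\mathbb{R}^+)$, with the definition of Peetre's $K$-functional \eqref{nzr1}, and to close by invoking the classical inequality $K_2(f,\delta)\le M\{\omega_2(f,\sqrt{\delta})+\min(1,\delta)\}$ (valid with an absolute constant $M>0$, the standard bridge between the $K$-functional and the second-order modulus of continuity).

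First I would fix $f\in C_B(\mathbb{R}^+)$ and an arbitrary $g\in\mathcal{W}^2$, and split
\[
\tilde{K}_{n,q}(f;x)-f(x) = \bigl[\tilde{K}_{n,q}(f-g;x)-(f-g)(x)\bigr]+\bigl[\tilde{K}_{n,q}(g;x)-g(x)\bigr].
\]
Because $\tilde{K}_{n,q}$ is a positive linear operator with $\tilde{K}_{n,q}(1;x)=1$ by Lemma \ref{nlm1}\ref{nlm11}, the first bracket is bounded by $2\|f-g\|_{C_B(\mathbb{R}^+)}$. For the second bracket I apply Theorem \ref{nsn2}, whose right-hand side I regroup as $2\delta_n(x)\,\|g\|_{C_B^2(\mathbb{R}^+)}$ with
\[
\delta_n(x):=\frac{\lambda_n(x)}{4}+\frac{n}{2(n+\beta)}\Bigl(\frac{\alpha}{n}+\frac{1}{[2]_q[n]_q}\Bigr)+\frac{1}{2}\Bigl(\frac{2nq}{[2]_q(n+\beta)}-1\Bigr)x,
\]
which is exactly the quantity appearing under the square root and inside $\min(1,\cdot)$ in the statement.

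Combining the two pieces gives
\[
|\tilde{K}_{n,q}(f;x)-f(x)|\le 2\bigl\{\|f-g\|_{C_B(\mathbb{R}^+)}+\delta_n(x)\,\|g\|_{C_B^2(\mathbb{R}^+)}\bigr\}.
\]
Since $g\in\mathcal{W}^2$ was arbitrary, taking the infimum on the right collapses this to $|\tilde{K}_{n,q}(f;x)-f(x)|\le 2K_2(f,\delta_n(x))$. The $K$-functional inequality mentioned above then delivers the claim with the constant $2M$ in front, matching the statement.

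The main obstacle is essentially bookkeeping: I must verify that the regrouping $\mu_n(x)+\lambda_n(x)/2 = 2\delta_n(x)$, where $\mu_n(x)$ is the first moment from Lemma \ref{cvc}\ref{nlm22}, is consistent with the coefficients printed in the theorem, and that the prefactor $2$ produced by the $K$-functional argument combines cleanly with the constant $M$ coming from the $K_2$--$\omega_2$ inequality. No new estimate on the operator itself is required beyond Lemma \ref{nlm1}, Lemma \ref{cvc}, and Theorem \ref{nsn2}; the whole argument is an assembly of these with the classical $K$-functional bound.
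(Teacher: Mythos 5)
Your proposal is correct and follows essentially the same route as the paper: decompose via an auxiliary $g\in\mathcal{W}^{2}$, bound the $f-g$ part by $2\parallel f-g\parallel_{C_{B}(\mathbb{R}^{+})}$ using positivity and $\Tilde{K_{n,q}}(1;x)=1$, bound the $g$ part by Theorem \ref{nsn2} together with $\parallel g\parallel_{C_{B}}\leq\parallel g\parallel_{C_{B}^{2}}$, take the infimum over $g$ to obtain $2K_{2}(f;\delta_{n}(x))$ with $\delta_{n}(x)=\frac{\lambda_{n}(x)}{4}+\frac{\mu_{n}(x)}{2}$, and finish with the inequality $K_{2}(f;\delta)\leq C\{\omega_{2}(f;\sqrt{\delta})+\min(1,\delta)\parallel f\parallel\}$ cited from \cite{scc1}. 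The bookkeeping identity $\mu_{n}(x)+\frac{\lambda_{n}(x)}{2}=2\delta_{n}(x)$ you flag does check out against the coefficients printed in the theorem.
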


\begin{proof}
We prove this by using Theorem \eqref{nsn2}
\begin{eqnarray*}
\mid \Tilde{K_{n,q}}(f;x)-f(x)\mid &\leq &\mid \Tilde{K_{n,q}}(f-g;x)\mid
+\mid \Tilde{K_{n,q}}(g;x)-g(x)\mid +\mid f(x)-g(x)\mid \\
&\leq &2\parallel f-g\parallel _{C_{B}(\mathbb{R}^{+})}+\frac{\lambda _{n}(x)%
}{2}\parallel g\parallel _{C_{B}^{2}(\mathbb{R}^{+})} \\
&+&\left\{ \frac{n}{(n+\beta)}\left(\frac{\alpha}{n}+\frac{1}{ [2]_q[n]_q}\right)+\left(\frac{2nq}{[2]_q(n+\beta)}-1\right)x\right\}
 \parallel g\parallel _{C_{B}(\mathbb{R}^{+})}
\end{eqnarray*}%
From \eqref{nt1} clearly we have ~~~$\parallel g\parallel _{C_{B}[0,\infty
)}\leq \parallel g\parallel _{C_{B}^{2}[0,\infty )}$.\newline
Therefore,\newline
$\mid \Tilde{K_{n,q}}(f;x)-f(x)\mid $
\begin{equation*}
\leq 2\parallel f-g\parallel _{C_{B}(\mathbb{R}^{+})}+\left\{\frac{\lambda _{n}(x)%
}{2}
+ \frac{n}{(n+\beta)}\left(\frac{\alpha}{n}+\frac{1}{ [2]_q[n]_q}\right)+\left(\frac{2nq}{[2]_q(n+\beta)}-1\right)x\right\}\parallel g\parallel _{C_{B}^{2}(\mathbb{R}^{+})}
\end{equation*}%
where $\lambda _{n}(x)$ is given in Theorem \ref{tbb1}.\newline

By taking infimum over all $g\in C_{B}^{2}(\mathbb{R}^{+})$ and by using %
\eqref{nzr1}, we get
\begin{equation*}
\mid \Tilde{K_{n,q}}(f;x)-f(x)\mid \leq 2K_{2}\left\{ f;\frac{\lambda _{n}(x)%
}{4}
+ \frac{n}{2(n+\beta)}\left(\frac{\alpha}{n}+\frac{1}{ [2]_q[n]_q}\right)+\frac{1}{2}\left(\frac{2nq}{[2]_q(n+\beta)}-1\right)x\right\}
\end{equation*}%
Now for an absolute constant $C>0$ in \cite{scc1} we use the relation
\begin{equation*}
K_{2}(f;\delta )\leq C\{\omega _{2}(f;\sqrt{\delta })+\min (1,\delta
)\parallel f\parallel \}.
\end{equation*}%
This completes the proof of the theorem.
\end{proof}


\section{Construction of Bivariate Operators}

In this section, we construct a bivariate extension of the operators %
\eqref{nss1}.

Let $\mathbb{R}^2_+ = [0,\infty)\times [0,\infty),~~f: C(\mathbb{R}^2_+ )\to
\mathbb{R}$ and $0<q_{n_1},q_{n_2}<1$. We define the bivariate extension of
the Dunkl $q$-parametric Sz\'{a}sz-Mirakjan operators \eqref{nss1} as
follows:\newline

\begin{equation*}
K_{n_1,n_2}^*(f;q_{n_1},q_{n_2};x,y)=\frac{1}{E_{%
\mu_1,q_{n_1}}([n_1]_{q_{n_1}}x)}\frac{1}{E_{\mu_2,q_{n_2}}([n_2]_{q_{n_2}}y)%
} \sum_{k_1=0}^\infty\sum_{k_2=0}^\infty \frac{([n_1]_{q_{n_1}}x)^{k_1}}{%
\gamma_{\mu_1,q_{n_1}}(k_1)} \frac{([n_2]_{q_{n_2}}y)^{k_2}}{%
\gamma_{\mu_2,q_{n_2}}(k_2)}
\end{equation*}

\begin{equation}
\times q_{n_{1}}^{\frac{k_{1}(k_{1}-1)}{2}}q_{n_{2}}^{\frac{k_{2}(k_{2}-1)}{2%
}}\int_{~~R}\int f\left(\frac{n_1x+\alpha_1}{n_1+\beta_1},\frac{n_2y+\alpha_2}{n_2+\beta_2}\right)d_{q}yd_{q}x  \label{nst1}
\end{equation}%
where\newline
$R=\bigg{[}\frac{[k_{1}+2\mu _{1}\theta _{k_{1}}]_{q_{n_{1}}}}{%
q_{n_{1}}^{k_{1}-2}[n_{1}]_{q_{n_{1}}}},\frac{[k_{1}+1+2\mu _{1}\theta
_{k_{1}}]_{q_{n_{1}}-1}}{q_{n_{1}}^{k_{1}-1}[n_{1}]_{q_{n_{1}}}}+\frac{1}{%
[n_{1}]_{q_{n_{1}}}}\bigg{]}$

$\times \bigg{[}\frac{[k_{2}+2\mu _{2}\theta _{k_{2}}]_{q_{n_{2}}}}{%
q_{n_{2}}^{k_{2}-2}[n_{2}]_{q_{n_{2}}}},\frac{[k_{2}+1+2\mu _{2}\theta
_{k_{2}}]_{q_{n_{2}}-1}}{q_{n_{2}}^{k_{2}-1}[n_{2}]_{q_{n_{2}}}}+\frac{1}{%
[n_{2}]_{q_{n_{2}}}}\bigg{]}$

$=\bigg{\{}(x,y)\in \mathbb{R}^{2}~~\mbox{such}~~\mbox{that}~~%
\frac{[k_{1}+2\mu _{1}\theta _{k_{1}}]_{q_{n_{1}}}}{%
q_{n_{1}}^{k_{1}-2}[n_{1}]_{q_{n_{1}}}}\leq x\leq \frac{\lbrack k_{1}+1+2\mu
_{1}\theta _{k_{1}}]_{q_{n_{1}}-1}}{q_{n_{1}}^{k_{1}-1}[n_{1}]_{q_{n_{1}}}}+%
\frac{1}{[n_{1}]_{q_{n_{1}}}}~~\mbox{and}~~$

$\frac{[k_{2}+2\mu _{2}\theta _{k_{2}}]_{q_{n_{2}}}}{%
q_{n_{2}}^{k_{2}-2}[n_{2}]_{q_{n_{2}}}}\leq y\leq \frac{\lbrack k_{2}+1+2\mu
_{2}\theta _{k_{2}}]_{q_{n_{2}}-1}}{q_{n_{2}}^{k_{1}-1}[n_{2}]_{q_{n_{2}}}}+%
\frac{1}{[n_{2}]_{q_{n_{2}}}}\bigg{\}}$,\newline
and

\begin{equation*}
E_{\mu_1,q_{n_1}}([n_1]_{q_{n_1}}x)=\sum_{k_1=0}^\infty\frac{%
([n_1]_{q_{n_1}}x)^{k_1}}{\gamma_{\mu_1,q_{n_1}}(k_1)}q_{n_1}^{\frac{%
k_1(k_1-1)}{2}},~~~~~ E_{\mu_2,q_{n_2}}([n_2]_{q_{n_2}}y)=\sum_{k_2=0}^\infty%
\frac{([n_2]_{q_{n_2}}y)^{k_2}}{\gamma_{\mu_2,q_{n_2}}(k_2)}q_{n_2}^{\frac{%
k_2(k_2-1)}{2}}.
\end{equation*}

\begin{lemma}
Let $e_{i,j}:\mathbb{R}_{+}^{2}\rightarrow \lbrack 0,\infty )$ be such that $%
e_{i,j}=u^{i}v^{j},~i,j=0,1,2$ be the two dimensional test functions. Then
the $q$-bivariate operators defined in \eqref{nst1} satisfy the following:

\begin{enumerate}
\item \label{nl1} $K_{n_1,n_2}^*(e_{0,0};q_{n_1},q_{n_2};x,y)=1$

\item \label{nl2} $K_{n_1,n_2}^*(e_{1,0};q_{n_1},q_{n_2};x,y)=\frac{n_1}{(n_1+\beta_1)}\left(\frac{\alpha_1}{n_1}+\frac{1}{ [2]_{q_{n_1}}[n_1]_{q_{n_1}}}\right)+\frac{2n_1q_{n_1}}{[2]_{q_{n_1}}(n_1+\beta_1)}x$

\item \label{nl3} $K_{n_1,n_2}^*(e_{0,1};q_{n_1},q_{n_2};x,y)=\frac{n_2}{(n_2+\beta_2)}\left(\frac{\alpha_2}{n_2}+\frac{1}{ [2]_{q_{n_2}}[n_2]_{q_{n_2}}}\right)+\frac{2n_2q_{n_2}}{[2]_{q_{n_2}}(n_2+\beta_2)}y$

\item \label{nl4} $K_{n_1,n_2}^*(e_{2,0};q_{n_1},q_{n_2};x,y)\leq \frac{n_1^2}{(n_1+\beta_1)^2}\left(\frac{\alpha_1^2}{n_1^2}+\frac{2 \alpha_1}{[2]_{q_{n_1}} n_1 [n_1]_{q_{n_1}}}+
\frac{ 1}{ [3]_{q_{n_1}} [n_1]_{q_{n_1}}^2 (n_1+\beta_1)}\right)
+ \frac{n_1^2}{(n_1+\beta_1)^2}\left(\frac{3 [1+2\mu_1]_{q_{n_1}}}{[3]_{q_{n_1}}[n_1]_{q_{n_1}}} +\frac{3q_{n_1}}{[3]_{q_{n_1}}[n_1]_{q_{n_1}}}+ \frac{4 q_{n_1} \alpha_1  }{[2]_{q_{n_1}} n_1 } \right)x+
\frac{3n_1^2}{[3]_{q_{n_1}}(n_1+\beta_1)^2}x^2  $

\item \label{nl5}$K_{n_1,n_2}^*(e_{0,2};q_{n_1},q_{n_2};x,y)\leq \frac{n_2^2}{(n_2+\beta_2)^2}\left(\frac{\alpha_2^2}{n_2^2}+\frac{2 \alpha_2}{[2]_{q_{n_2}} n_2 [n_2]_{q_{n_2}}}+
\frac{ 1}{ [3]_{q_{n_2}} [n_2]_{q_{n_2}}^2 (n_2+\beta_2)}\right)
+ \frac{n_2^2}{(n_2+\beta_2)^2}\left(\frac{3 [1+2\mu_2]_{q_{n_2}}}{[3]_{q_{n_2}}[n_2]_{q_{n_2}}} +\frac{3q_{n_2}}{[3]_{q_{n_2}}[n_2]_{q_{n_2}}}+ \frac{4 q_{n_2} \alpha_2  }{[2]_{q_{n_2}} n_2 } \right)y+
\frac{3n_2^2}{[3]_{q_{n_2}}(n_2+\beta_2)^2}y^2  $.
\end{enumerate}
\end{lemma}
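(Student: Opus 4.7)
The plan is to exploit the fact that the bivariate operator defined in \eqref{nst1} has a tensor-product structure: each ingredient (the two normalizing factors $E_{\mu_i, q_{n_i}}$, the double series, and the integration rectangle $R$) factors completely into its $(n_1, k_1, x, t_1)$ and $(n_2, k_2, y, t_2)$ pieces. Consequently, for any product function $\varphi(t_1)\psi(t_2)$, one should have
\begin{equation*}
K_{n_1, n_2}^{*}(\varphi \otimes \psi; q_{n_1}, q_{n_2}; x, y) = \tilde{K}_{n_1, q_{n_1}}(\varphi; x)\, \tilde{K}_{n_2, q_{n_2}}(\psi; y),
\end{equation*}
so the problem reduces to a single application of Lemma \ref{nlm1} on each factor.

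First I would verify this factorization explicitly in one sample case, say $e_{1,0}(u,v)=u$: write out the double series, separate the $k_1$-sum from the $k_2$-sum, and observe that the $k_2$-sum collapses against $E_{\mu_2,q_{n_2}}([n_2]_{q_{n_2}}y)$ via \eqref{sr2}, leaving exactly $\tilde{K}_{n_1,q_{n_1}}(t;x)\cdot 1$. Once the factorization principle is established, each of the five items is immediate: item \eqref{nl1} uses part \eqref{nlm11} of Lemma \ref{nlm1} on both factors; items \eqref{nl2} and \eqref{nl3} use part \eqref{nlm12} on the active coordinate together with \eqref{nlm11} on the inactive one; and items \eqref{nl4} and \eqref{nl5} combine the upper bound of part \eqref{nlm13} on the active coordinate with \eqref{nlm11} on the inactive one. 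The inequality signs in items \eqref{nl4} and \eqref{nl5} are inherited directly from the univariate upper bound through the tensor-product identity, since $\tilde{K}_{n_j, q_{n_j}}(1;\cdot)=1$ is a nonnegative multiplier.

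The only genuine work is confirming the factorization itself; this is bookkeeping rather than substance, but it is the step where a careless handling of the rectangular region $R$ or of the weights $q_{n_1}^{k_1(k_1-1)/2} q_{n_2}^{k_2(k_2-1)/2}$ could produce an error. Beyond this, no new estimation is required, and the five identities/inequalities follow just by reading Lemma \ref{nlm1} off twice.
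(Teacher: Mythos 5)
Your proposal is correct: the operator \eqref{nst1} is an exact tensor product of two copies of the univariate operator \eqref{nss1} (normalizers, weights, and the rectangle $R$ all factor), so each item reduces to reading off Lemma \ref{nlm1} in the active variable while the inactive factor contributes $\tilde{K}_{n_j,q_{n_j}}(1;\cdot)=1$, with the inequalities in items \eqref{nl4}--\eqref{nl5} inherited from the univariate upper bound. The paper states this lemma without proof, and your factorization argument is precisely the intended (and standard) justification.
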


\begin{lemma}\label{lml}
The $q$-bivariate operators defined in \eqref{nst1} satisfy the following:

\begin{enumerate}
\item \label{nll1} $K_{n_1,n_2}^*(e_{1,0}-x;q_{n_1},q_{n_2};x,y)=\frac{n_1}{(n_1+\beta_1)}\left(\frac{\alpha_1}{n_1}+\frac{1}{ [2]_{q_{n_1}}[n_1]_{q_{n_1}}}\right)+\left(\frac{2n_1q_{n_1}}{[2]_{q_{n_1}}(n_1+\beta_1)}-1\right)x
$

\item \label{nll2} $K_{n_1,n_2}^*(e_{0,1}-y;q_{n_1},q_{n_2};x,y)=\frac{n_2}{(n_2+\beta_2)}\left(\frac{\alpha_2}{n_2}+\frac{1}{ [2]_{q_{n_2}}[n_2]_{q_{n_2}}}\right)+\left(\frac{2n_2q_{n_2}}{[2]_{q_{n_2}}(n_2+\beta_2)}-1\right)y$

\item \label{nll3} $K_{n_{1},n_{2}}^{\ast }\left( (e_{1,0}-x\right)
^{2};q_{n_{1}},q_{n_{2}};x,y)$\newline
$\leq \frac{n_1^2}{(n_1+\beta_1)^2}\left(\frac{\alpha_1^2}{n_1^2}+\frac{2 \alpha_1}{[2]_{q_{n_1}} n_1 [n_1]_{q_{n_1}}}+
\frac{ 1}{ [3]_{q_{n_1}} [n_1]_{q_{n_1}}^2 (n_1+\beta_1)}\right)\newline
+ \left\{\frac{n_1^2}{(n_1+\beta_1)^2}\left(\frac{3 [1+2\mu_1]_{q_{n_1}}}{[3]_{q_{n_1}}[n_1]_{q_{n_1}}} +\frac{3q_{n_1}}{[3]_{q_{n_1}}[n_1]_{q_{n_1}}}+ \frac{4 q_{n_1} \alpha_1  }{[2]_{q_{n_1}} n_1 } \right)
-  \frac{2n_1}{(n_1+\beta_1)}\left(\frac{\alpha_1}{n_1}+\frac{1}{ [2]_{q_{n_1}}[n_1]_{q_{n_1}}}\right) \right\}x
+ \left(\frac{3n_1^2}{[3]_{q_{n_1}}(n_1+\beta_1)^2}- \frac{2n_1q_{n_1}}{[2]_{q_{n_1}}(n_1+\beta_1)}+1 \right) x^2 $

\item $K_{n_{1},n_{2}}^{\ast }\left( (e_{0,1}-y\right)
^{2};q_{n_{1}},q_{n_{2}};x,y)$\newline
$\leq \frac{n_2^2}{(n_2+\beta_2)^2}\left(\frac{\alpha_2^2}{n_2^2}+\frac{2 \alpha_2}{[2]_{q_{n_2}} n_2 [n_2]_{q_{n_2}}}+
\frac{ 1}{ [3]_{q_{n_2}} [n_2]_{q_{n_2}}^2 (n_2+\beta_2)}\right)\newline
+ \left\{\frac{n_2^2}{(n_2+\beta_2)^2}\left(\frac{3 [1+2\mu_2]_{q_{n_2}}}{[3]_{q_{n_2}}[n_2]_{q_{n_2}}} +\frac{3q_{n_2}}{[3]_{q_{n_2}}[n_2]_{q_{n_2}}}+ \frac{4 q_{n_2} \alpha_2  }{[2]_{q_{n_2}} n_2 } \right)
-  \frac{2n_2}{(n_2+\beta_2)}\left(\frac{\alpha_2}{n_2}+\frac{1}{ [2]_{q_{n_2}}[n_2]_{q_{n_2}}}\right) \right\}y
+ \left(\frac{3n_2^2}{[3]_{q_{n_2}}(n_2+\beta_2)^2}- \frac{2n_2q_{n_2}}{[2]_{q_{n_2}}(n_2+\beta_2)}+1 \right) y^2$.\\
\end{enumerate}
\end{lemma}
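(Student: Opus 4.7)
The plan is to reduce each of the four identities to its one-dimensional counterpart in Lemma \ref{cvc} by exploiting the tensor-product structure of the kernel in \eqref{nst1}. For any separable test function $f(u,v)=g(u)h(v)$, the double sum and the double $q$-integral in \eqref{nst1} factor (by Fubini for $q$-integrals on the product rectangle $R$), and the Stancu transformation $(t,s)\mapsto \bigl(\tfrac{n_1 t+\alpha_1}{n_1+\beta_1},\tfrac{n_2 s+\alpha_2}{n_2+\beta_2}\bigr)$ acts componentwise, so
\[
K_{n_1,n_2}^{*}(gh;q_{n_1},q_{n_2};x,y) = \tilde{K_{n_1,q_{n_1}}}(g;x)\,\tilde{K_{n_2,q_{n_2}}}(h;y).
\]
Combined with $\tilde{K_{n,q}}(1;\cdot)=1$ from Lemma \ref{nlm1}, this is the only structural fact I will use.

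For part (\ref{nll1}), I apply linearity of $K_{n_1,n_2}^{*}$ and item (\ref{nl2}) of the preceding bivariate moment lemma to obtain
\[
K_{n_1,n_2}^{*}(e_{1,0}-x;q_{n_1},q_{n_2};x,y) = K_{n_1,n_2}^{*}(e_{1,0};q_{n_1},q_{n_2};x,y) - x.
\]
Since $e_{1,0}(u,v)=u\cdot 1$, the factorization above reduces the right-hand side to $\tilde{K_{n_1,q_{n_1}}}(t;x)-x = \tilde{K_{n_1,q_{n_1}}}(t-x;x)$, which is precisely the expression from Lemma \ref{cvc}(\ref{nlm22}) after the substitution $(n,q,\alpha,\beta)\mapsto(n_1,q_{n_1},\alpha_1,\beta_1)$. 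Part (\ref{nll2}) follows by the same argument with the roles of the two coordinates exchanged and the index $1$ replaced by $2$.

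For part (\ref{nll3}), I expand
\[
(e_{1,0}-x)^2 = e_{2,0} - 2x\,e_{1,0} + x^2\,e_{0,0},
\]
apply linearity, invoke $K_{n_1,n_2}^{*}(e_{0,0};\cdot)=1$, and recognise each remaining term as a univariate moment of $\tilde{K_{n_1,q_{n_1}}}$ in $x$ alone (the $y$-factor collapses to $1$). The upper bound for $K_{n_1,n_2}^{*}(e_{2,0};\cdot)$ is exactly item (\ref{nl4}) of the preceding lemma, which is itself the upper half of Lemma \ref{nlm1}(\ref{nlm13}); the $e_{1,0}$ term is an equality given by item (\ref{nl2}). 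Assembling these three contributions reproduces the stated upper bound, which is the bivariate transcription of Lemma \ref{cvc}(\ref{nlm23}). The fourth item follows by the symmetric argument in $y$, using items (\ref{nl3}) and (\ref{nl5}) of the preceding lemma. The main obstacle here is not analytic but bookkeeping: the Stancu-weighted expressions are long, so some care is required to line up the three pieces of the expansion correctly; beyond this, no new estimate is needed.
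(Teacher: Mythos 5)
The paper states this lemma with no proof at all, so there is nothing to compare your argument against; the tensor-product reduction you describe is certainly what the authors intend, and it is sound in outline. Two points need attention before the write-up is complete. First, as printed, the definition \eqref{nst1} omits the normalizing factors $[n_1]_{q_{n_1}}[n_2]_{q_{n_2}}$ that appear in the univariate operator \eqref{nss1}; your factorization $K^{\ast}_{n_1,n_2}(gh;q_{n_1},q_{n_2};x,y)=\tilde{K_{n_1,q_{n_1}}}(g;x)\,\tilde{K_{n_2,q_{n_2}}}(h;y)$ silently restores them (without them even $K^{\ast}_{n_1,n_2}(e_{0,0};\cdot)=1$ fails), so you should state explicitly that this is the intended reading. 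Second, your claim that assembling $K^{\ast}(e_{2,0})-2xK^{\ast}(e_{1,0})+x^{2}$ ``reproduces the stated upper bound'' is not literally accurate: the cross term $-2x\cdot\frac{2n_1q_{n_1}}{[2]_{q_{n_1}}(n_1+\beta_1)}x$ contributes $-\frac{4n_1q_{n_1}}{[2]_{q_{n_1}}(n_1+\beta_1)}x^{2}$, whereas item (\ref{nll3}) (and its univariate source, Lemma \ref{cvc}(\ref{nlm23})) displays the coefficient $-\frac{2n_1q_{n_1}}{[2]_{q_{n_1}}(n_1+\beta_1)}$. Since for $x\geq 0$ one has $-\frac{4n_1q_{n_1}}{[2]_{q_{n_1}}(n_1+\beta_1)}x^{2}\leq-\frac{2n_1q_{n_1}}{[2]_{q_{n_1}}(n_1+\beta_1)}x^{2}$, your assembly proves something slightly stronger and the printed inequality follows a fortiori; but you should record that one-line comparison rather than assert exact agreement. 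The rest — linearity, using the exact value of $K^{\ast}(e_{1,0})$ from item (\ref{nl2}) so that the negative cross term is controlled by an equality rather than an upper bound, and the symmetric argument in $y$ — is correct as you have it.
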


\parindent=8mmIn order to obtain the convergence results for the operators $%
K_{n_{1},n_{2}}^{\ast }(f;q_{n_{1}},q_{n_{2}};x,y)$, we take $%
q=q_{n_{1}},~~q_{n_{2}}$ where $q_{n_{1}},q_{n_{2}}\in (0,1)$ and satisfy
\begin{equation}
\lim_{n_{1},n_{2}}q_{n_{1}},q_{n_{2}}\rightarrow 1.  \label{nnasi5}
\end{equation}

The modulus of continuity for bivariate case is defined as follows:\newline

For $f\in H_{\omega }(\mathbb{R}_{+}^{2})$\newline
$\widetilde{\omega }(f;\delta _{1},\delta _{2})$
\begin{equation}  \label{nsnn1}
=\sup_{u,x\geq 0}\left\{ \bigg{|}f(u,v)-f(x,y)\bigg{|};~~\bigg{|}u-x\bigg{|}%
\leq \delta _{1},\bigg{|}v-y\bigg{|}\leq \delta _{2},~~(u,v)\in \mathbb{R}%
_{+}^{2},~~(x,y)\in \mathbb{R}_{+}^{2}\right\}.
\end{equation}

where $H_{\omega }(\mathbb{R}^{+})$ is the space of all real-valued
continuous functions $f$.\newline
Then for all $f\in H_{\omega }(\mathbb{R}_{+})$ $\widetilde{\omega }%
(f;\delta _{1},\delta _{2})$ we have the following conditions:

\begin{enumerate}
\item[$($i$)$] $\lim_{\delta_1,\delta_2 \to 0}\widetilde{\omega}(f;
\delta_1,\delta_2)\to 0$

\item[$($ii$)$] $\mid f(u,v)-f(x,y) \mid \leq \widetilde{\omega}(f;
\delta_1,\delta_2) \left(\frac{\mid u-x\mid}{\delta_1}+1 \right)\left(\frac{%
\mid v-y\mid}{\delta_2}+1 \right).$
\end{enumerate}

\begin{theorem}
\label{ntn1} Let $q=q_{n_{1}},q_{n_{2}}$ satisfy \eqref{nnasi5} and $%
K_{n_{1},n_{2}}^{\ast }(f;q_{n_{1}},q_{n_{2}},x,y)$ be the operators defined
by \eqref{nst1}. Then for any function $f\in \tilde{C}\left( [0,\infty
)\times \lbrack 0,\infty )\right) $, for $(x,y)\in \lbrack 0,\infty
),~0<q_{n_{1}},q_{n_{2}}<1$, we have \newline
$\mid K_{n_{1},n_{2}}^{\ast }(f;q_{n_{1}},q_{n_{2}},x,y)-f(x,y)\mid $
\begin{eqnarray*}
&\leq & \omega \left( f;\frac{1}{\sqrt{[n_{1}]_{q_{n_{1}}}}},\frac{1}{\sqrt{%
[n_{2}]_{q_{n_{2}}}}}\right)\lambda_{n_1}(x)\lambda_{n_2}(y),
\end{eqnarray*}%
where $\lambda_{n_1}(x)=K_{n_1,n_2}^*\left((e_{1,0}-x)^2;q_{n_1},q_{n_2};x,y%
\right),~~~~~
\lambda_{n_2}(y)=K_{n_1,n_2}^*\left((e_{0,1}-y)^2;q_{n_1},q_{n_2};x,y\right)$ are defined in Lemma \ref{lml} and $\tilde{C}[0,\infty )$ is the space of uniformly continuous functions
on $\mathbb{R}^{+}$. Moreover, $\widetilde{\omega }(f,\delta _{n_{1}},\delta
_{n_{2}})$ is the modulus of continuity of the function $f\in \tilde{C}%
\left( [0,\infty )\times \lbrack 0,\infty )\right) $ defined in \eqref{nsnn1}.%
\end{theorem}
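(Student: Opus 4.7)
The plan is to adapt the univariate argument of Theorem \ref{tbb1} to the bivariate setting by exploiting the tensor-product structure of the kernel of $K_{n_1,n_2}^{\ast}$ and property (ii) of the bivariate modulus of continuity. Since the operator is positive and $K_{n_1,n_2}^{\ast}(e_{0,0};q_{n_1},q_{n_2};x,y)=1$ by part \ref{nl1} of the preceding lemma, I would first pull the absolute value inside and bound
\[
\bigl|K_{n_1,n_2}^{\ast}(f;q_{n_1},q_{n_2};x,y)-f(x,y)\bigr|\le K_{n_1,n_2}^{\ast}\bigl(|f(u,v)-f(x,y)|;q_{n_1},q_{n_2};x,y\bigr).
\]

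Next, I would invoke property (ii) of $\widetilde{\omega}$ to obtain the pointwise estimate
\[
|f(u,v)-f(x,y)|\le\widetilde{\omega}(f;\delta_1,\delta_2)\left(1+\tfrac{|u-x|}{\delta_1}\right)\left(1+\tfrac{|v-y|}{\delta_2}\right).
\]
Applying $K_{n_1,n_2}^{\ast}$ to the right-hand side is the key structural step: because the kernel of $K_{n_1,n_2}^{\ast}$ is a tensor product of the univariate Dunkl--Szász kernels and the integration region $R$ is a product of one-dimensional $q$-intervals, the double sum and iterated $q$-integral factor, so that
\[
K_{n_1,n_2}^{\ast}\!\left(\!\left(1+\tfrac{|u-x|}{\delta_1}\right)\!\left(1+\tfrac{|v-y|}{\delta_2}\right)\!;x,y\right)=\left(1+\tfrac{1}{\delta_1}\tilde{K}_{n_1,q_{n_1}}(|t-x|;x)\right)\!\left(1+\tfrac{1}{\delta_2}\tilde{K}_{n_2,q_{n_2}}(|s-y|;y)\right).
\]

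After this factorization, I would apply the Cauchy--Schwarz inequality inside the series and the $q$-integral for each variable separately, exactly as in the proof of Theorem \ref{tbb1}, to get $\tilde{K}_{n_i,q_{n_i}}(|\cdot-x_i|;x_i)\le\sqrt{\tilde{K}_{n_i,q_{n_i}}((\cdot-x_i)^2;x_i)}=\sqrt{\lambda_{n_i}(x_i)}$ for $i=1,2$, where the second-moment estimates are supplied by parts \ref{nll3} and the fourth item of Lemma \ref{lml}. The final step is simply to choose $\delta_1=1/\sqrt{[n_1]_{q_{n_1}}}$ and $\delta_2=1/\sqrt{[n_2]_{q_{n_2}}}$ in order to match the modulus argument appearing in the statement.

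The main technical obstacle is the bookkeeping at the tensor-product / Cauchy--Schwarz step: one must carefully split the weight $([n_i]_{q_{n_i}}x_i)^{k_i}q_{n_i}^{k_i(k_i-1)/2}/\gamma_{\mu_i,q_{n_i}}(k_i)$ into two factors with exponents summing to one so that one factor, together with $E_{\mu_i,q_{n_i}}([n_i]_{q_{n_i}}x_i)^{-1}$, reproduces the probability-like normalization $K_{n_1,n_2}^{\ast}(e_{0,0};x,y)=1$ and the other combines with the $q$-integral of $(t-x)^2$ to give $\lambda_{n_i}(x_i)$. Everything else is the routine substitution of the explicit bounds from Lemma \ref{lml}.
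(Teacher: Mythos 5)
Your proposal is correct and is precisely the argument the paper intends: the paper's own ``proof'' consists solely of the remark that one should use the Cauchy--Schwarz inequality together with the choices $\delta_{1}=1/\sqrt{[n_{1}]_{q_{n_{1}}}}$, $\delta_{2}=1/\sqrt{[n_{2}]_{q_{n_{2}}}}$, and your tensor-product factorization of the kernel followed by the univariate Cauchy--Schwarz step in each variable is the standard way to supply those omitted details. The only caveat is that this derivation actually yields the bound $\widetilde{\omega}\left(f;\delta_{n_{1}},\delta_{n_{2}}\right)\left(1+\sqrt{[n_{1}]_{q_{n_{1}}}\lambda_{n_{1}}(x)}\right)\left(1+\sqrt{[n_{2}]_{q_{n_{2}}}\lambda_{n_{2}}(y)}\right)$ rather than the literal product $\widetilde{\omega}\,\lambda_{n_{1}}(x)\lambda_{n_{2}}(y)$ printed in the theorem --- a discrepancy that lies in the paper's statement (the analogous mismatch already occurs between the statement and the last display of the proof of Theorem \ref{tbb1}), not in your argument.
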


\begin{proof}
We can prove easily by using the Cauchy-Schwarz inequality and choosing $%
\delta _{1}=\delta _{n_{1}}=\sqrt{\frac{1}{[n_{1}]_{q_{n_{1}}}}}$ and $%
\delta _{2}=\delta _{n_{2}}=\sqrt{\frac{1}{[n_{2}]_{q_{n_{2}}}}}$ . So we
omit the details.
\end{proof}

Now we give the rate of convergence of the operators ${K}%
_{n_1,n_2}^*(f;q_{n_1},q_{n_2};x,y) $ defined in \eqref{nst1} in terms of
the elements of the usual Lipschitz class $Lip_{M}(\nu_1,\nu_2 )$.

Let $f\in C([0,\infty )\times \lbrack 0,\infty ))$, $M>0$ and $0<\nu
_{1},\nu _{2}\leq 1$. The class $Lip_{M}(\nu _{1},\nu _{2})$ is defined by
\begin{equation}
Lip_{M}(\nu _{1},\nu _{2})=\left\{ f:\mid f(u,v)-f(x,y)\mid \leq M\mid
u-x\mid ^{\nu _{1}}\mid v-y\mid ^{\nu _{2}}~~~(u,v)~~~\mbox{and}~~~(x,y)\in
\lbrack 0,\infty )\right\} .  \label{nnn1}
\end{equation}

\begin{theorem}
\label{nsy1} Let $K_{n_1,n_2}^*(f;q_{n_1},q_{n_2};x,y)$ be the operator
defined in \eqref{nst1}. Then for each $f\in Lip_{M}(\nu_1,\nu_2
),~~(M>0,~~~0<\nu_1,\nu_2 \leq 1)$ satisfying \eqref{nnn1} we have
\begin{equation*}
\mid K_{n_1,n_2}^*(f;q_{n_1},q_{n_2};x,y)-f(x,y)\mid \leq M
\left(\lambda_{n_1}(x)\right)^{\frac{\nu_1}{2}}
\left(\lambda_{n_2}(y)\right)^{\frac{\nu_2}{2}},
\end{equation*}
where $\lambda_{n_1}(x)$ and $\lambda_{n_2}(y)$ are defined in Theorem \ref{ntn1}.
\end{theorem}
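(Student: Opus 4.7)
The plan is to mimic the univariate Lipschitz estimate (Theorem \ref{nsn1}) but exploit the tensor-product structure of $K_{n_1,n_2}^*$ to separate the two variables and then apply H\"older's inequality independently in each. First, since $K_{n_1,n_2}^*(e_{0,0};q_{n_1},q_{n_2};x,y)=1$, I would write
\begin{equation*}
|K_{n_1,n_2}^*(f;q_{n_1},q_{n_2};x,y)-f(x,y)| \leq K_{n_1,n_2}^*\bigl(|f(u,v)-f(x,y)|;q_{n_1},q_{n_2};x,y\bigr),
\end{equation*}
and invoke the Lipschitz hypothesis \eqref{nnn1} to bound the integrand by $M|u-x|^{\nu_1}|v-y|^{\nu_2}$. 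Because the kernel of $K_{n_1,n_2}^*$ factors as a product of the one-dimensional Dunkl $q$-Sz\'asz-Mirakjan-Kantorovich kernels, the resulting expression splits cleanly as
\begin{equation*}
M \cdot \widetilde{K}_{n_1,q_{n_1}}(|u-x|^{\nu_1};x) \cdot \widetilde{K}_{n_2,q_{n_2}}(|v-y|^{\nu_2};y).
\end{equation*}

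Next, I would apply H\"older's inequality in each variable separately, with exponents $p_i = 2/\nu_i$ and $p_i' = 2/(2-\nu_i)$ for $i=1,2$. For the $x$-variable this gives
\begin{equation*}
\widetilde{K}_{n_1,q_{n_1}}(|u-x|^{\nu_1};x) \leq \bigl(\widetilde{K}_{n_1,q_{n_1}}((u-x)^2;x)\bigr)^{\nu_1/2} \bigl(\widetilde{K}_{n_1,q_{n_1}}(1;x)\bigr)^{(2-\nu_1)/2},
\end{equation*}
and the analogous bound holds for the $y$-factor. Using Lemma \ref{lml}\eqref{nl1} (i.e.\ that the operator preserves constants, so the trailing factor equals $1$), this reduces to $(\lambda_{n_1}(x))^{\nu_1/2}(\lambda_{n_2}(y))^{\nu_2/2}$, yielding the claimed inequality.

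The routine calculations (the splitting of the double sum/integral and the two applications of H\"older) are essentially carbon copies of the steps used in the proof of Theorem \ref{nsn1}, so the only genuine point requiring care is the first step: verifying that the product form of the bivariate kernel really does let $|u-x|^{\nu_1}|v-y|^{\nu_2}$ factor through as a product of two univariate operator actions. This is immediate once one observes that each factor in the integrand depends on only one of $u,v$, so the double summation-integral separates. After that, there is no substantive obstacle and the proof is short enough that one may legitimately omit the details, exactly as the author does for Theorem \ref{ntn1}.
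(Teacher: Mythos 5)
Your argument is correct and is precisely the route the paper intends: the published proof simply states that the result follows from \eqref{nnn1} and H\"older's inequality and omits the details, which are exactly the separation of variables via the product kernel and the two applications of H\"older with exponents $2/\nu_i$ and $2/(2-\nu_i)$ that you carry out (mirroring the univariate Theorem \ref{nsn1}). No discrepancy to report.
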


\begin{proof}
We can prove easily it by using \eqref{nnn1} and H\"{o}lder inequality. So
we omit the details.
\end{proof}


\textbf{Acknowledgement.} Second author (MN) acknowledges the financial sup-
port of University Grants Commission (Govt. of Ind.) for awarding BSR (Basic
Scientific Research) Fellowship.

\end{document}